\definecolor{mygray}{gray}{0.9}
\newcommand{\tikzmark}[2]{\tikz[overlay,remember picture,baseline] \node [anchor=base] (#1) {$#2$};}
\newcommand{\emptytikzmark}[2]{\tikz[overlay,remember picture,baseline] \node [anchor=base,opacity=0] (#1) {$#2$};}
\newcommand{\DrawVLine}[3][]{%
  \begin{tikzpicture}[overlay,remember picture]
    \draw[shorten >=0.2mm,shorten <=-0.1mm, #1] ([xshift=0.1mm]#2.south east) -- ([xshift=0.1mm]#3.north east);
  \end{tikzpicture}
}
\newcommand{\DrawVLineLeft}[3][]{%
  \begin{tikzpicture}[overlay,remember picture]
    \draw[shorten >=0.03mm,shorten <=-0.1mm, #1] ([xshift=-0.25mm]#2.south west) -- ([xshift=-0.25mm]#3.north west);
  \end{tikzpicture}
}
\newcommand{\DrawHLine}[3][]{%
  \begin{tikzpicture}[overlay,remember picture]
    \draw[shorten <=-0.2mm,shorten >=-0.1mm, #1] (#2.south west) -- (#3.south east);
  \end{tikzpicture}
}
\newcommand{\DrawHLineAbove}[3][]{%
  \begin{tikzpicture}[overlay,remember picture]
    \draw[shorten >=-0.2mm, #1] ([yshift=-0.3mm]#2.north west) -- ([yshift=-0.31mm]#3.north east);
  \end{tikzpicture}
}
\newtheorem{thm}{Theorem}[section]
\newtheorem{lem}[thm]{Lemma}
\newtheorem{prop}[thm]{Proposition}
\newtheorem{cor}[thm]{Corollary}
\newtheorem{conj}[thm]{Conjecture}
\theoremstyle{definition}
\newtheorem{ex}[thm]{Example}
\newtheorem{remark}[thm]{Remark}
\newtheorem{defn}[thm]{Definition}
\newtheorem*{defn*}{Definition}
\newtheorem*{thm*}{Theorem}
\newtheorem*{prop*}{Proposition}
\newtheorem*{cor*}{Corollary}
\newtheorem{conjecture}[thm]{Conjecture}
\newtheorem{question}[thm]{Question}
\begin{document}
\title{Coincidences Among Skew Dual Stable Grothendieck Polynomials}
\author{Ethan Alwaise, Shuli Chen, Alexander Clifton, Rebecca Patrias, Rohil Prasad, Madeline Shinners, Albert Zheng}
\maketitle

\begin{abstract}
The question of when two skew Young diagrams produce the same skew Schur function has been well-studied. We investigate the same question in the case of stable Grothendieck polynomials, which are the $K$-theoretic analogues of the Schur functions. We prove a necessary condition for two skew shapes to give rise to the same dual stable Grothendieck polynomial. We also provide a necessary and sufficient condition in the case where the two skew shapes are ribbons.
\end{abstract}

\section{Introduction}
It is well known that the Schur functions indexed by the set of partitions $\{s_{\lambda}\}$ form a linear basis for the ring of symmetric functions over $\mathbb{Z}$. However, for general skew shapes $\lambda / \mu$, the corresponding Schur functions are no longer linearly independent. In fact, two different skew shapes can give rise to the same Schur function. Such skew shapes are called \textit{Schur equivalent}. There are trivial examples of such equivalences---for instance $\langle 2 \rangle$ is clearly Schur-equivalent to $\langle 4 \rangle / \langle 2 \rangle$ as they yield the same shape positioned differently in space---and there are also many non-trivial examples. (Note that we use angled brackets here to denote a partition instead of parentheses to avoid ambiguity with later notation.) For example, the shapes shown below are Schur equivalent \cite{rsw2009coincidences}.
\begin{center}
\ytableausetup{boxsize=.2cm}
\ydiagram{4+2, 2+3,1+4,1+2,2,2}
\qquad\ydiagram{4+2,3+2,3+2,1+3,4,2}
\end{center}

It is natural to ask when these coincidences occur. One application of this type of result involves the representation theory of $GL_N(\mathbb{C})$. In this setting, equality among skew Schur functions corresponds to equivalence of certain $GL_N(\mathbb{C})$ modules \cite{rsw2009coincidences}.
Coincidences among skew Schur functions have been studied by Billera-Thomas-van Willigenburg \cite{billera2006decomposable}, Reiner-Shaw-van Willigenburg \cite{rsw2009coincidences}, and McNamara-van Willigenburg \cite{mcnamara2009towards}, among others. 
\iffalse
In \cite{billera2006decomposable}, the authors provide necessary and sufficient conditions for two ribbons $\alpha$ and $\beta$ to be Schur equivalent, where a ribbon is a skew shape containing no $2\times 2$ square. In \cite{rsw2009coincidences} the authors establish some necessary conditions and some sufficient conditions for two skew shapes to be Schur equivalent. 
\fi

The stable and dual stable Grothendieck polynomials are natural ($K$-theoretic) analogues of Schur functions obtained as weighted generating functions over \textit{set-valued tableaux} and \textit{reverse plane partitions}, respectively \cite{buch2002lrrule,lam2007combinatorial}. Roughly speaking, while the Schur functions give infomation about the cohomology of the Grassmannian, these analogues give information about the $K$-theory of the Grassmannian, where $K$-theory is a generalized cohomology theory. Our work concerns the combinatorics of these objects, so knowledge of cohomology theories is not necessary.

The question of coincidences among 
stable and dual stable Grothendieck polynomials of skew shapes was previously unstudied. After a brief background in symmetric functions, we focus on dual stable Grothendieck polynomials of ribbon shape $g_\alpha$, where a ribbon is a connected Young diagram containing no $2\times 2$ square. For a ribbon shape $\alpha$, let $\alpha^*$ denote the shape obtained by 180-degree rotation. We prove the following theorem.

\begin{thm*}[Theorem \ref{thm:littlegribbon}] 
For ribbons $\alpha$ and $\beta$, we have $g_\alpha = g_\beta$ if and only if $\alpha = \beta$ or $\alpha = \beta^*$
\end{thm*}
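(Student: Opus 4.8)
The plan is to treat the two implications separately: the ``if'' direction follows from an explicit symmetry of reverse plane partitions, while the ``only if'' direction amounts to reconstructing a ribbon, up to $180^\circ$ rotation, from the coefficients of $g_\alpha$. For the easy direction it suffices to show $g_\alpha = g_{\alpha^*}$. Fix $N$, and given a reverse plane partition $T$ of shape $\alpha$ with entries in $\{1,\dots,N\}$, let $T^*$ be obtained by rotating $T$ by $180^\circ$ and replacing each entry $e$ by $N+1-e$. Rotation reverses the weak order along both rows and columns and complementation reverses it back, so $T^*$ is a reverse plane partition of $\alpha^*$; moreover the number of columns of $T$ containing $i$ equals the number of columns of $T^*$ containing $N+1-i$. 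Thus $T \mapsto T^*$ is a weight-reversing bijection, giving $g_{\alpha^*}(x_1,\dots,x_N) = g_\alpha(x_N,\dots,x_1)$, which equals $g_\alpha(x_1,\dots,x_N)$ by symmetry of $g_\alpha$; letting $N\to\infty$ yields $g_\alpha = g_{\alpha^*}$.

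For the hard direction I would first record the coarse invariants: the top-degree part of $g_\alpha$ is the ribbon Schur function $s_\alpha$ (homogeneous of degree $n$, the number of cells), and the lowest-degree part has degree equal to the number of columns $p$ (each column must contribute at least one value, and the assignment sending every cell in column $c$ to the value $c$ realizes this minimum). Hence $g_\alpha = g_\beta$ already forces $\alpha,\beta$ to share $n$, $p$, and therefore the number of rows $n-p+1$. The real content is to recover the full shape, and for this I would pass to the column reformulation $g_\alpha = \sum_T \prod_c \prod_{v \in S_c} x_v$, where $S_c$ is the set of entries in column $c$. Because $\alpha$ is a ribbon, consecutive columns meet in a single cell, which imposes \emph{only} the weak constraint $\min S_c \le \max S_{c+1}$; consequently $g_\alpha$ is computed by a transfer matrix depending only on the column-height sequence $(h_1,\dots,h_p)$, i.e.\ on the conjugate composition $\alpha^t$. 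Since conjugation is a bijection on ribbons that carries reversal of $(h_c)$ to the rotation $\alpha \mapsto \alpha^*$, it suffices to show that $g_\alpha$ determines $(h_1,\dots,h_p)$ up to reversal.

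To reconstruct $(h_c)$ I would isolate the order-sensitive information carried by the constraint $\min S_c \le \max S_{c+1}$, whose only violations are ``descents'' in which an entire column lies strictly above the next. The cleanest handle is the two-variable specialization $g_\alpha(x,y)$: a reverse plane partition on the alphabet $\{1,2\}$ is recorded by declaring each column all $1$'s, mixed, or all $2$'s, where a mixed height-$h$ column carries multiplicity $h-1$, and the constraint forbids exactly an all-$2$ column immediately followed by an all-$1$ column. This is a three-state transfer matrix whose generating polynomial is a function of $(h_1-1,\dots,h_p-1)$: its symmetric coefficients recover the multiset of heights, while the coefficients sensitive to the single forbidden transition recover adjacency statistics such as $\sum_c \prod_{c' \notin \{c,c+1\}} (h_{c'}-1)$. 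I would then try to prove, by an induction that peels matching data off the two ends at once, that this family of statistics pins down $(h_c)$ up to reversal.

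The main obstacle is precisely this final reconstruction: proving that the adjacency data extracted from $g_\alpha$ is \emph{complete}, so that no two height sequences which are not reverses of one another produce the same polynomial. The reversal ambiguity is intrinsic, since the transfer matrix cannot distinguish the left end from the right end, so the task is exactly to rule out any further collapse. I expect this to require either a delicate induction on $p$ or, if two variables turn out to be insufficient, passing to enough variables that the richer family of forbidden ``descent'' patterns forces injectivity; establishing that completeness is where essentially all the difficulty lies, as it is also the step that must defeat the extra non-reversal coincidences that already occur among ribbon Schur functions.
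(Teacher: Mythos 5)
Your ``if'' direction is correct and matches the paper's Proposition 2.1, and your reformulation of the hard direction is sound: $g_\alpha$ does depend only on the column-height sequence $[\alpha_1,\ldots,\alpha_n]$ (your $(h_1,\ldots,h_p)$), since consecutive columns of a ribbon interact only through $\min S_c \le \max S_{c+1}$, and reversing that sequence is exactly $\alpha\mapsto\alpha^*$. This is essentially the content of Proposition \ref{ribbon_expand}. The problem is that the entire substance of the theorem --- showing that $g_\alpha$ determines the height sequence up to reversal --- is left as a plan rather than a proof, and you say so yourself (``I would then try to prove\ldots I expect this to require\ldots''). Worse, the statistics you actually identify are provably insufficient: the degree-$(p+1)$ two-variable coefficients yield only the symmetrized sums $h_i+h_{p-i+1}$ (this is Lemma \ref{ribbon-col-sum} / Theorem \ref{bottleneck_cond} specialized to ribbons), and these do not pin down the sequence up to reversal --- e.g.\ $[1,2,1,3]$ and $[1,1,2,3]$ have identical symmetrized sums but are neither equal nor reverses. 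So any completion of your argument must extract genuinely new information from higher-degree or multi-variable coefficients, and you give no mechanism for doing so; ``completeness of the adjacency data'' is precisely the theorem, not a lemma on the way to it.

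The paper closes this gap by importing an external classification rather than reconstructing the shape from coefficients. Since $g_\alpha=g_\beta$ forces $s_\alpha=s_\beta$, the Billera--Thomas--van Willigenburg theorem (Theorem \ref{irred_fact}) already restricts $\beta$ to having an irreducible factorization $\beta_k\circ\cdots\circ\beta_1$ with each $\beta_i\in\{\alpha_i,\alpha_i^*\}$; the only remaining freedom is the choice of star on each factor. At that point the weak invariant $h_i+h_{p-i+1}$ \emph{is} enough: Lemma \ref{factor_sym} uses it to show that whenever $\alpha=\sigma\circ\mu$ and $\beta=\tau\circ\mu$ with $\alpha\ne\beta$, the common inner factor satisfies $\mu=\mu^*$, and an induction along the factorization finishes the proof. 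If you want to salvage your approach, the missing ingredient is exactly this reduction: either invoke Theorem \ref{irred_fact} to shrink the search space before applying your symmetrized-sum statistics, or prove from scratch the completeness claim you flagged as the main obstacle --- which no amount of two-variable data at degree $p+1$ will give you.
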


We next prove two necessary conditions for dual stable Grothendieck equivalence involving \textit{bottleneck numbers} of shape $\lambda/\mu$, $b_i^{\lambda/\mu}$.

\begin{thm*}[Theorem 3.10]
Suppose $g_{\lambda / \mu} = g_{\gamma / \nu}$. Then
\[
 b_i^{\lambda / \mu}+b_{n-i+1}^{\lambda  / \mu} = b_i^{\gamma / \nu}+b_{n-i+1}^{\gamma / \nu}
\]
for $i=1,2,\ldots,n$ where $n$ is the number of columns in $\lambda / \mu$.
\end{thm*}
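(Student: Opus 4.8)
The plan is to extract the quantities $b_i^{\lambda/\mu}+b_{n-i+1}^{\lambda/\mu}$ directly from low-degree coefficients of $g_{\lambda/\mu}$, so that the hypothesis $g_{\lambda/\mu}=g_{\gamma/\nu}$ forces them to agree term by term. Recall that $g_{\lambda/\mu}=\sum_T \mathbf{x}^T$, summed over reverse plane partitions $T$ of shape $\lambda/\mu$, where the exponent of $x_k$ in $\mathbf{x}^T$ is the number of columns of $T$ containing the entry $k$. First I would recover $n$, the number of columns, as an invariant: the total degree of $\mathbf{x}^T$ equals $\sum_{\text{columns } c}(\#\text{ distinct entries in } c)$, which is at least $n$ and equals $n$ exactly when every column is constant (for instance, the all-$k$ filling). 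Hence $n$ is the minimal degree of $g_{\lambda/\mu}$, and two equivalent shapes have the same number of columns.

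Then I would specialize to two variables and study $g_{\lambda/\mu}(x_1,x_2,0,0,\ldots)$, a symmetric polynomial all of whose coefficients are invariants of the equivalence class. A reverse plane partition with entries in $\{1,2\}$ has degree $n+(\#\text{ mixed columns})$, where a column is \emph{mixed} if it contains both a $1$ and a $2$; so the degree-$(n+1)$ part is precisely the generating function of those $2$-colorings with exactly one mixed column, and a configuration with $i$ columns containing a $1$ contributes the monomial $x_1^{\,i}x_2^{\,n+1-i}$. The key lemma I would establish is that, for each $i$,
\[
[x_1^{\,i}x_2^{\,n+1-i}]\, g_{\lambda/\mu} \;=\; b_i^{\lambda/\mu}+b_{n-i+1}^{\lambda/\mu},
\]
obtained by sorting these $2$-colorings according to which column is mixed and how the $1$/$2$ boundary threads through the shape.

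The appearance of the symmetric sum rather than $b_i^{\lambda/\mu}$ alone is accounted for by the involution that swaps $1\leftrightarrow 2$ and rotates the shape $180^\circ$: it is a weight-preserving bijection sending configurations counted by $x_1^{\,i}x_2^{\,n+1-i}$ to those counted by $x_1^{\,n+1-i}x_2^{\,i}$, which is exactly why only $b_i^{\lambda/\mu}+b_{n-i+1}^{\lambda/\mu}$, and not the individual bottleneck numbers, can be read off from $g$. Matching the degree-$(n+1)$ coefficients of $g_{\lambda/\mu}=g_{\gamma/\nu}$ (legitimate since both shapes share the same $n$) then yields $b_i^{\lambda/\mu}+b_{n-i+1}^{\lambda/\mu}=b_i^{\gamma/\nu}+b_{n-i+1}^{\gamma/\nu}$ for all $i$.

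I expect the main obstacle to be the key lemma itself: the count of one-mixed-column $2$-colorings is genuinely shape-dependent, because whether a given column may be all-$1$, all-$2$, or mixed is governed by its row-overlaps with the neighboring columns (a column disjoint from its neighbors is unconstrained, whereas overlapping columns force weak monotonicity of the boundary). It is precisely this overlap/bottleneck structure that must be shown to collapse the coefficient into the clean sum $b_i^{\lambda/\mu}+b_{n-i+1}^{\lambda/\mu}$, and carrying this out carefully—rather than assuming the naive ``all-$1$s on the left, all-$2$s on the right'' picture, which fails for general skew shapes—is where the real work lies.
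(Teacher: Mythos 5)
Your overall strategy---specialize to two variables, identify the degree-$(n+1)$ part of $g_{\lambda/\mu}$ with $1,2$-RPPs having exactly one mixed column, and read off the bottleneck data from the coefficients of $x_1^i x_2^{n+1-i}$---is exactly the paper's approach. But your key lemma, $[x_1^{i}x_2^{n+1-i}]\,g_{\lambda/\mu}=b_i^{\lambda/\mu}+b_{n-i+1}^{\lambda/\mu}$, is false, and no amount of care in the case analysis will rescue it as stated. Counterexample: for $\lambda/\mu=\langle 2,2\rangle$ (the $2\times 2$ square, $n=2$, $m=2$) there are no bottleneck edges, so your lemma predicts $[x_1x_2^2]\,g=0$; but the filling with a $1$ in the top-left box and $2$'s elsewhere is the unique $1,2$-RPP contributing $x_1x_2^2$, so the coefficient is $1$. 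The structural reason is that \emph{every} one of the $m-1$ interior heights contributes at least one single-mixed-column configuration (the unique lattice path through the one ``valid'' edge at that height), whether or not that edge is a bottleneck; bottleneck edges only contribute \emph{extra} paths because the mixed column can then be flanked by a variable number of $1$-pure columns on either side. The correct statement, which the paper proves, is that the coefficient $t_r$ of $x_1^{r}x_2^{n-r+1}$ (for $r\le n-r+1$) equals
\[
t_r=(m-1)+f_2+2f_3+\cdots+(r-1)f_r+(r-1)f_{r+1}+\cdots+(r-1)f_k,
\qquad f_i:=b_i+b_{n-i+1},\ k=\lceil n/2\rceil,
\]
so each individual coefficient is an affine combination of \emph{all} the $f_j$, not $f_i$ alone.

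This changes the endgame of the proof in a way your proposal does not anticipate. One recovers $f_r$ for $2\le r\le k-1$ only by taking second differences, $2t_r-t_{r-1}-t_{r+1}=f_r$, and $f_k$ from the expression for $t_k$; crucially, $f_1$ is \emph{not} determined by these coefficients at all, and the paper has to import an external input---Corollary 8.11 of Reiner--Shaw--van Willigenburg, which says the number of rows $m$ and the total $b_1+\cdots+b_n=f_1+\cdots+f_k$ are already invariants of Schur equivalence (hence of $g$-equivalence)---to pin down $f_1$. Your heuristic involution (swap $1\leftrightarrow 2$ and rotate $180^\circ$) correctly explains why only the symmetric sums $b_i+b_{n-i+1}$ can be extracted, but it is not a substitute for the actual count. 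To repair the argument you would need to (i) prove the displayed formula for $t_r$ by the four-case analysis of where the unique interior horizontal edge sits relative to the left/right boundaries, (ii) invert the resulting linear system via second differences, and (iii) supply the separate argument for $f_1$.
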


\begin{thm*}[Corollary \ref{cor:trianglesums}]
Suppose $g_{\lambda/\mu} = g_{\gamma/\nu}$. Then
\[
\sum_{i=1}^n (b_i^{\lambda/\mu})^2 = \sum_{i=1}^n (b_i^{\gamma/\nu})^2.
\]
\end{thm*}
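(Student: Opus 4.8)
The plan is to deduce the corollary from Theorem 3.10 together with a single additional $g$-invariant. First I would record the purely algebraic consequences of Theorem 3.10. Summing the identity $b_i^{\lambda/\mu}+b_{n-i+1}^{\lambda/\mu}=b_i^{\gamma/\nu}+b_{n-i+1}^{\gamma/\nu}$ over $i=1,\dots,n$ and noting that each side counts every index twice shows that the linear statistic $\sum_{i=1}^n b_i^{\lambda/\mu}$ is already preserved. Squaring the same identity and summing gives
\[
\sum_{i=1}^n\bigl(b_i^{\lambda/\mu}+b_{n-i+1}^{\lambda/\mu}\bigr)^2=2\sum_{i=1}^n\bigl(b_i^{\lambda/\mu}\bigr)^2+2\sum_{i=1}^n b_i^{\lambda/\mu}b_{n-i+1}^{\lambda/\mu},
\]
and the left-hand side is preserved by Theorem 3.10. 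Hence the desired equality of $\sum_i (b_i^{\lambda/\mu})^2$ with $\sum_i (b_i^{\gamma/\nu})^2$ is equivalent to the preservation of the palindromic autocorrelation $\sum_{i=1}^n b_i^{\lambda/\mu}b_{n-i+1}^{\lambda/\mu}$; equivalently, using $2\binom{m}{2}=m^2-m$ and the already-established invariance of $\sum_i b_i^{\lambda/\mu}$, it is equivalent to the preservation of the triangle sum $\sum_{i=1}^n\binom{b_i^{\lambda/\mu}}{2}$. This explains why a quadratic statistic cannot follow from the linear identities of Theorem 3.10 alone, and it isolates the one new invariant that must be produced.

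To produce it, I would go back to the symmetric function itself. Since $g_{\lambda/\mu}=g_{\gamma/\nu}$ as inhomogeneous symmetric functions, the two sides must agree in every fixed degree, so each homogeneous component of $g_{\lambda/\mu}$ is an invariant of the equivalence class. The plan is to locate the component whose expansion records quadratic data in the bottleneck numbers---just as I expect the component used to prove Theorem 3.10 records the linear data---and to read the triangle sum off one of its coefficients. Concretely, I would expand the relevant component in a convenient basis (monomial or complete homogeneous) and match the coefficient counting the reverse plane partitions that realize exactly two independent \emph{breaks} among the columns; the number of such fillings should be governed by pairs of bottleneck numbers and thus assemble into $\sum_i\binom{b_i^{\lambda/\mu}}{2}$ modulo the already-known linear invariant $\sum_i b_i^{\lambda/\mu}$.

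The main obstacle is precisely this combinatorial identification: showing that the chosen coefficient of $g_{\lambda/\mu}$ equals $\sum_i\binom{b_i^{\lambda/\mu}}{2}$ (equivalently $\sum_i b_i^{\lambda/\mu}b_{n-i+1}^{\lambda/\mu}$) up to quantities already known to be invariant, rather than some other quadratic combination. Carrying this out requires a careful analysis of how reverse plane partitions of small excess degree interact with the column-overlap structure defining the bottleneck numbers, and in particular controlling the cross terms coming from two distinct columns versus a single column carrying two extra values. As a consistency check I would verify the answer against the rotation symmetry $g_\alpha=g_{\alpha^*}$ from Theorem \ref{thm:littlegribbon}: rotation reverses the bottleneck sequence via $b_i\mapsto b_{n-i+1}$, under which both $\sum_i (b_i)^2$ and $\sum_i b_i b_{n-i+1}$ are manifestly unchanged, so any correct identification of the new invariant must be symmetric under reversal.
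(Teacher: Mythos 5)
Your algebraic reductions are sound and your strategy is the paper's: the corollary does not follow from Theorem 3.10 alone, and the missing ingredient is one more quadratic $g$-invariant extracted from a single coefficient of $g_{\lambda/\mu}$. Your observation that preserving $\sum_i (b_i)^2$ is equivalent (given Theorem 3.10 and the invariance of $\sum_i b_i$) to preserving $\sum_i b_i b_{n-i+1}$, or to preserving $\sum_i \binom{b_i}{2}$, is correct, and your consistency check against reversal of the bottleneck sequence is the right sanity test.

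However, there is a genuine gap: the entire content of the corollary lives in the step you defer as ``the main obstacle,'' namely producing the quadratic invariant, and you neither identify the coefficient to use nor carry out the count. The paper does this in Proposition \ref{x1_2 x2_n}: the coefficient of $x_1^2 x_2^{n}$ (degree $n+2$, two mixed columns) equals $\binom{m}{2} - \sum_{i=1}^n \binom{b_i+1}{2}$. The proof goes through the bijection between $1,2$-RPPs and lattice paths: a filling with monomial $x_1^2x_2^n$ corresponds to a path with exactly two interior horizontal edges, a path is determined by the pair of heights of those edges (no $1$-pure columns means no residual freedom), there are $\binom{m}{2}$ multisets of heights, and a pair of heights fails to yield a path precisely when both heights occur only as bottleneck edges in one and the same column, contributing $\binom{b_i+1}{2}$ failures per column. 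Since $m$ and $\sum_i b_i$ are invariant (the latter by Corollary 8.11 of \cite{rsw2009coincidences}, or by your summation of Theorem 3.10), and $\binom{b_i+1}{2} = \tfrac{1}{2}(b_i^2+b_i)$, invariance of $\sum_i b_i^2$ follows. Without this computation --- in particular without the argument that the \emph{only} failing pairs of heights are pairs of bottlenecks in a common column, and that every non-failing pair gives exactly one filling --- your proposal establishes only that the corollary would follow from a suitable invariant, not that such an invariant exists.
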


We end by giving examples that show that stable Grothendieck equivalence does not imply dual stable Grothendieck equivalence and vice versa and by highlighting areas for future research.

\section{Preliminaries}
\subsection{Partitions and tableaux}
A \textit{partition} $\lambda=\langle \lambda_1,\lambda_2,\ldots,\lambda_k \rangle$ of a positive integer $n$ is a weakly decreasing sequence of positive integers $\lambda_1 \geq \lambda_2 \geq \cdots \geq \lambda_k > 0$ whose sum is $n$. The integer $\lambda_i$ is called the $i$th \textit{part} of $\lambda$. We call $n$ the \textit{size} of $\lambda$, denoted by $\vert \lambda \vert = n$. Throughout this document $\lambda$ will refer to a partition. We may visualize a partition $\lambda$ using a \textit{Young diagram}: a collection of left-justified boxes where the $i$th row from the top has $\lambda_i$ boxes. For example, the Young diagram of $\lambda = \langle 5,2,1,1 \rangle$ is shown below.

\ytableausetup{boxsize=.35cm}
$$\ydiagram{5,2,1,1}$$

A \textit{skew shape} $\lambda/\mu$ is a pair of partitions $\lambda = \langle\lambda_1,\ldots,\lambda_m \rangle$ and $\mu = \langle\mu_1,\ldots,\mu_k\rangle$ such that $k \leq m$ and $\mu_i \leq \lambda_i$ for all $i$. We form the Young diagram of a skew shape $\lambda / \mu$ by superimposing the Young diagrams of $\lambda$ and $\mu$ and removing the boxes that are contained in both. If $\mu$ is empty, $\lambda/\mu=\lambda$ is called a \textit{straight shape}. Given a skew shape $\lambda/\mu$, we define its \textit{antipodal rotation} $(\lambda/\mu)^*$ as the skew shape obtained by rotating the Young diagram of $\lambda/\mu$ by $180$ degrees. For example, the Young diagrams of the skew shapes $\langle 6,3,1 \rangle / \langle 3,1\rangle $ and $(\langle 6,3,1\rangle / \langle 3,1\rangle)^*$ are shown below.

$$\langle 6,3,1 \rangle /\langle 3,1\rangle =\ydiagram{3+3, 1+2, 1} \qquad \qquad (\langle 6,3,1 \rangle /\langle 3,1 \rangle)^*=\ydiagram{5+1, 3+2, 3}$$

A \textit{semistandard Young tableau} of shape $\lambda / \mu$ is a filling of the boxes of the Young diagram of $\lambda / \mu$ with positive integers such that the entries weakly increase from left to right across rows and strictly increase from top to bottom down columns. Two semistandard Young tableaux are shown below.
\ytableausetup{boxsize=.45cm}
$$\begin{ytableau}1 & 1 & 4 & 7 \\ 2 & 6 \\ 9\end{ytableau}\qquad\qquad \begin{ytableau}\none & \none & 1 & 3 & 3 \\ \none & 1 & 4 & 6 \\ 1 & 4\end{ytableau}$$

A \textit{set-valued tableau} of shape $\lambda / \mu$ is a filling of the boxes of the Young diagram of $\lambda / \mu$ with finite, nonempty sets of positive integers such that the entries weakly increase from left to right across rows and strictly increase from top to bottom down columns.  For two sets of positive integers $A$ and $B$, we say that $A \leq B$ if $\max{A} \leq \min{B}$ and $A < B$ if $\max{A} < \min{B}$. For a set-valued tableau $T$, we define $\vert T \vert$, the \textit{size} of $T$, to be the sum of the sizes of the sets appearing as entries in $T$. For example,
\ytableausetup{boxsize=.75cm} 
$$\begin{ytableau} 1,2 & 2,3 & 6 & 9 \\ 
3 & 5 \\
6 & 6,7 \end{ytableau}$$
is a set-valued tableau of shape $\lambda = \langle 4,2,2 \rangle $ and size $11$.

A \textit{reverse plane partition} (RPP) of shape $\lambda / \mu$ is a filling of the boxes of the Young diagram of $\lambda / \mu$ with positive integers such that the entries weakly increase both from left to right across rows and from top to bottom down columns. For example,
\ytableausetup{boxsize=.45cm}
$$\begin{ytableau}\none & 1 & 1 & 2 & 7 \\ \none & 1 & 2 & 2 & 8 \\ 1 & 2 & 2 & 2\end{ytableau}$$
is a reverse plane partition of shape $\langle 5,5,4\rangle /\langle 1,1 \rangle $.
\subsection{Symmetric functions}
To each of the above fillings of a Young diagram we may associate a monomial as follows. First, let $T$ be a semistandard or set-valued tableau. We associate a monomial $x^T$ given by
$$x^T = \prod_{i \in \mathbb{N}} x_i^{m_i},$$
where $m_i$ is the number of times the integer $i$ appears as an entry in $T$.
For example, the semistandard Young tableaux shown above correspond to monomials $x_1^2x_2x_4x_6x_7x_9$ and $x_1^3x_3^2x_4^2x_6$, respectively, while the set-valued tableau corresponds to monomial $x_1x_2^2x_3^2x_5x_6^3x_7x_9$.

Given a reverse plane partition $P$, the associated monomial $x^P$ is given by
$$x^P = \prod_{i \in \mathbb{N}} x_i^{m_i},$$
where $m_i$ is the number of columns of $P$ that contain the integer $i$ as an entry. The reverse plane partition shown above has monomial $x_1^3x_2^3x_7x_8$.

We can now define the Schur functions, the stable Grothendieck polynomials, and the dual stable Grothendieck polynomials, which are all indexed by skew shapes.

We define the \textit{Schur function} $s_{\lambda / \mu}$ by
$$s_{\lambda / \mu} = \sum_{T}^{}x^T,$$
where we sum over all semistandard Young tableaux of shape $\lambda / \mu$. Note that entries may be any positive integer, so $s_{\lambda/\mu}$ will be an infinite sum where each term has degree $|\lambda/\mu|=|\lambda|-|\mu|$. For example, 
\[s_{\langle 1 \rangle}=x_1+x_2+x_3+x_4+\ldots,\] and 
\[s_{\langle 2,1\rangle}=x_1^2x_2+x_1^2x_3+x_2^2x_3+\ldots + 2x_1x_2x_3+2x_1x_2x_4+\ldots+2x_4x_8x_{101}+\ldots.\]

Though is it not obvious from this combinatorial definition, the Schur functions are symmetric functions. In other words, each $s_{\lambda/\mu}$ is unchanged after permuting any finite subset of the infinite variable set $\{x_1,x_2,\ldots\}$. Moreover, the Schur functions indexed by straight shapes $\{s_\lambda\}$ form a basis for the ring of symmetric functions over $\mathbb{Z}$. These functions arise naturally in areas like algebraic combinatorics, representation theory, and Schubert calculus. We refer the interested reader to \cite{EC2} for further reading on Schur functions and symmetric functions.  

We next define the \textit{stable Grothendieck polynomial}, the first of two \textit{$K$-theoretic analogues} of the Schur functions. We direct the interested reader to \cite{buch2002lrrule} for more on this topic and for an explanation of the connection to $K$-theory. The stable Grothendieck polynomial $G_{\lambda / \mu}$ is defined by
$$G_{\lambda / \mu} = \sum_{T}^{}(-1)^{\vert T \vert -\vert \lambda \vert}x^T,$$
where we sum over all set-valued tableaux of shape $\lambda / \mu$. 

Note that semistandard tableaux are set-valued tableaux where each subset has size one. It follows that each $G_{\lambda/\mu}$ will be a sum of $s_{\lambda/\mu}$ plus terms of degree greater than $|\lambda/\mu|$. While each term in a Schur function has the same degree, each stable Grothendieck polynomial is an infinite sum where terms have arbitrarily large degree. For example, 
\[G_{\langle 1 \rangle}=x_1+x_2+\ldots -x_1x_2-x_2x_3+\ldots+x_1x_2x_4x_5x_9+\ldots,\] and \[G_{\langle 2,2 \rangle/\langle 1\rangle}=x_1^2x_2+2x_1x_2x_3+\ldots- 3x_1^2x_2x_3-8x_2x_5x_9x_{114}-\ldots+2x_1^2x_2^2x_3+\ldots.\]

%\Becky{Someone please double check these coefficients}

The other natural $K$-theoretic analogue of the Schur function is the \textit{dual stable Grothendieck polynomial}. It is dual to the stable Grothendieck polynomial under the Hall inner product. We refer the reader to \cite{lam2007combinatorial} for more background. We define the dual stable Grothendieck polynomial $g_{\lambda / \mu}$ by
$$g_{\lambda / \mu} = \sum_{P}^{}x^P,$$
where the sum is over all reverse plane partitions of shape $\lambda / \mu$.

Again, note that semistandard Young tableaux are examples of reverse plane partitions where the columns are strictly increasing. As a result, each dual stable Grothendieck polynomial $g_{\lambda/\mu}$ is a sum of the Schur function indexed by the same shape $s_{\lambda/\mu}$ and terms of degree strictly less than $|\lambda/\mu|$. They are again infinite sums, but now each term has degree at most $|\lambda/\mu|$ and at least the number of columns in shape $\lambda/\mu$. For example,
\[g_{\langle 2,1\rangle}=x_1^2x_2+2x_1x_2x_3+\ldots+x_1x_2+x_1x_3+\ldots+x_1^2+x_2^2+\ldots.\]

Though it is again not obvious from the definitions, both the stable and dual stable Grothendieck polynomials are symmetric functions. We use this fact throughout this paper.

We say that two skew shapes $D_1$ and $D_2$ are $G$-\textit{equivalent} or $g$-\textit{equivalent} if $G_{D_1} = G_{D_2}$ or $g_{D_1} = g_{D_2}$, respectively. Since any $G_D$ contains $s_D$ as its lowest degree terms, $G_{D_1}=G_{D_2}$ implies $s_{D_1}=s_{D_2}$. Similarly, $g_{D_1}=g_{D_2}$ implies $s_{D_1}=s_{D_2}$.
Furthermore, it is straightforward to check that two skew shapes that are equivalent in any of the three aforementioned senses must have the same number of rows and columns. We will implicitly use this fact throughout.

It is an easy consequence of symmetry that all three notions of skew equivalence are preserved under antipodal rotation, $^*$. We provide a proof for stable Grothendieck polynomials below.

\begin{prop}
For any skew shape $\lambda/\mu$, $G_{\lambda/\mu} = G_{(\lambda/\mu)^*}$ and $g_{\lambda/\mu} = g_{(\lambda/\mu)^*}$.
\end{prop}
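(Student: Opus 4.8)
The plan is to produce, for each fixed number of variables $N$, an explicit weight-preserving bijection between the set-valued tableaux of shape $\lambda/\mu$ and those of shape $(\lambda/\mu)^*$ with all entries in $[N] := \{1,\dots,N\}$, and then to invoke the symmetry of $G_{\lambda/\mu}$ to conclude. The argument for $g$ is identical once set-valued tableaux are replaced by reverse plane partitions and the tableau weight is replaced by the reverse-plane-partition weight (counting columns containing each value), so I would prove the $G$ statement in detail and simply remark that the $g$ statement follows \emph{mutatis mutandis}.

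First I would define the candidate bijection $\phi_N$. Given a set-valued tableau $T$ of shape $\lambda/\mu$ with entries in $[N]$, I rotate its Young diagram by $180$ degrees, obtaining a filling of $(\lambda/\mu)^*$, and then replace every entry $a$ by its complement $N+1-a$; call the result $\phi_N(T)$. The heart of the proof is checking that $\phi_N(T)$ is again a legal set-valued tableau. I would track what happens to the two defining inequalities: a weak increase from left to right along a row becomes, after rotation, a weak increase from right to left, and complementing entries by $a \mapsto N+1-a$ turns this back into a weak increase from left to right; likewise a strict increase down a column becomes a strict increase up a column, which complementation restores to a strict increase down a column. The subtle point in the set-valued setting is the comparison of entire sets: writing $A^{c} = \{\,N+1-a : a \in A\,\}$, one has $\min A^{c} = N+1-\max A$ and $\max A^{c} = N+1-\min A$, so $A \le B$ (that is, $\max A \le \min B$) is equivalent to $B^{c} \le A^{c}$, and the analogous statement holds for strict comparisons. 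This is exactly what is needed to confirm that complementation reverses the two inequalities in the correct way. Since the construction is visibly invertible (rotate back and complement again), $\phi_N$ is a bijection.

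Next I would track the weights. Rotation and complementation neither create nor destroy entries, so $|\phi_N(T)| = |T|$, and hence the sign $(-1)^{|T| - |\lambda/\mu|}$ is unchanged, using that $(\lambda/\mu)^*$ has the same number of boxes as $\lambda/\mu$. If $x^{T} = \prod_i x_i^{m_i}$, then $x^{\phi_N(T)} = \prod_i x_{N+1-i}^{m_i}$; in other words $\phi_N$ applies to the monomial the reversal permutation $w \colon x_i \mapsto x_{N+1-i}$ of the variables $x_1,\dots,x_N$. Summing over all tableaux therefore yields $G_{(\lambda/\mu)^*}(x_1,\dots,x_N) = w\bigl(G_{\lambda/\mu}(x_1,\dots,x_N)\bigr)$.

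Finally, because $G_{\lambda/\mu}$ is a symmetric function it is fixed by the variable permutation $w$, so the right-hand side equals $G_{\lambda/\mu}(x_1,\dots,x_N)$. As this holds for every $N$, and a symmetric function is determined by its specializations to finitely many variables, I conclude $G_{\lambda/\mu} = G_{(\lambda/\mu)^*}$. The main obstacle is the well-definedness verification in the second paragraph — in particular getting the set comparisons under complementation exactly right — since everything afterward is routine bookkeeping together with the single appeal to symmetry.
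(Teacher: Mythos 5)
Your proposal is correct and is essentially the paper's own argument: both rotate the tableau by $180$ degrees, complement the entries, verify that this reverses the row and column inequalities correctly, and then appeal to symmetry to undo the induced reversal of variables. The only cosmetic difference is that the paper complements relative to the largest index $i_k$ occurring in a fixed monomial rather than relative to a truncation to $N$ variables; the content is identical.
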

\begin{proof}
We prove the result for stable Grothendieck polynomials; the argument for dual stable Grothendieck polynomials is similar.
Let $x_I = x_{i_1}^{p_1}x_{i_2}^{p_2}\ldots x_{i_k}^{p_k}$ be a monomial with $i_1 < i_2 < \ldots < i_k$. It suffices to show that the $x_I$-coefficient of each of the two polynomials is equal. To do so, we construct a bijection between set-valued tableaux of shape $\lambda/\mu$ with weight monomial $x_I$ and set-valued tableaux of shape $(\lambda/\mu)^*$ with weight monomial $x_{I'} = x_{i_k+1-i_1}^{p_1}x_{i_k+1-i_2}^{p_2} \ldots x_{1}^{p_k}$. This bijection, which is in fact an involution, maps a tableau $T$ to the tableau $T'$ given by rotating $T$ and then replacing every entry $j$ with $i_k + 1 - j$. An example is given below where $i_k = 5$.
\ytableausetup{boxsize=.75cm}
$$T=\begin{ytableau} \none & 1 & 2, 3 \\ \none & 2 & 4 \\ 1 & 3 \end{ytableau} \longrightarrow \begin{ytableau} \none & 3 & 1 \\ 4 & 2 \\ 3, 2 & 1 \end{ytableau} \longrightarrow \begin{ytableau} \none & 2 & 4 \\ 1 & 3 \\ 2, 3 & 4 \end{ytableau}=T'$$

Thus, the $x_{I'}$-coefficient of $G_{(\lambda/\mu)^*}$ is equal to the $x_I$-coefficient of $G_{\lambda/\mu}$. By symmetry, the $x_{I'}$-coefficient of $G_{(\lambda/\mu)^*}$ is equal to the $x_I$-coefficient of $G_{(\lambda/\mu)^*}$, so the $x_I$-coefficients of $G_{\lambda/\mu}$ and $G_{(\lambda/\mu)^*}$ are equal as desired. 
\end{proof}

\subsection{Ribbon shapes}

We will be interested in a special class of skew shapes known as \textit{ribbons}. A skew shape $\alpha$ is called a ribbon if it is connected and contains no $2 \times 2$ rectangle. Being connected means that if there is more than one box, then each box must share an edge with another box. The shape shown below on the left is a ribbon while the shape in the middle and on the right are not. The shape in the middle contains a $2\times 2$ rectangle and the shape on the right is not connected. 
\ytableausetup{boxsize=.35cm}

$$\ydiagram{9+3,5+5,6}\ydiagram{9+3,4+6,6}\ydiagram{9+3,6+4,6}$$

A \textit{composition} of a positive integer $n$ is an ordered list of positive integers that sum to $n$. We will write compositions inside of parentheses. For example, $(2,7,4,9)$ is a composition of $22$. It is easy to see that ribbons of size $n$ are in bijection with compositions of $n$: to obtain a composition from a ribbon shape, simply read the row sizes from bottom to top. This is clearly a bijection. For this reason, we will denote a ribbon shape by the associated composition $\alpha$. For example, the we denote the ribbon shown above by $(6,5,3)$. 

%\Becky{We have a slight problem in that we describe a partition shape and a ribbon/composition using parentheses. This could in theory be a problem because it's unclear whether $(4,3,2)$ refers to the straight shape or the ribbon. This needs to be address in some way. We could change the notation for partitions to something like $\langle 4,3,2 \rangle$ (or anything else) and explain in a remark why we are not using the standard convention. This is probably easier at this point than changing the notation for compositions since we use them much more. Whoever does this should carefully go through the document and be sure all of the partition notation is consistent.}

Note that one can also construct a bijection between compositions and ribbons using the sizes of the columns of $\alpha$ read from left to right. We also describe ribbon shapes this way, and we use square brackets in place of parentheses to denote this column reading. For example, the ribbon shown above may be written as $[1,1,1,1,1,2,1,1,1,2,1,1]$.

%\Becky{Someone please double check the two composition examples.}

Notice that the antipodal rotation $\alpha^{*}$ of $\alpha=(\alpha_1, \alpha_2,\ldots,\alpha_k)$ is the ribbon  $(\alpha_k,\alpha_{k-1},\ldots,\alpha_1)$. We refer to $\alpha^*$ as the \textit{reverse ribbon} of $\alpha$. For a ribbon shape $\alpha$, we denote the corresponding Schur function by $s_\alpha$ and refer to it as the \textit{ribbon Schur function.} 

We now define several binary operations on the set of ribbons as in \cite{rsw2009coincidences}. %\Becky{We need a reference here. Where are they first defined?}
Here we let $\alpha = (\alpha_1,\ldots,\alpha_k)$ and $\beta = (\beta_1,\ldots,\beta_m)$ be ribbons. We define the concatenation operation

$$\alpha \cdot \beta = (\alpha_1,\ldots,\alpha_k,\beta_1.\ldots,\beta_m)$$
and the near concatenation operation
$$\alpha \odot \beta = (\alpha_1,\ldots,\alpha_{k-1},\alpha_k + \beta_1,\beta_2,\ldots,\beta_m).$$
We let
$$\alpha^{\odot n} = \underbrace{\alpha \odot \cdots \odot \alpha}_{\text{$n$}}.$$
We can combine the two concatenation operations to yield a third operation $\circ$, defined by
$$\alpha \circ \beta = \beta^{\odot\alpha_1}\cdot \beta^{\odot\alpha_2} {\cdots} \beta^{\odot\alpha_k}.$$

\begin{ex}
Consider ribbons $\alpha=(3,2)$ and $\beta=(1,2)$ shown below. 
$$\alpha=\ydiagram{2+2,3} \qquad \beta=\ydiagram{2,1}$$
Then $\alpha\cdot\beta$ and $\alpha\odot\beta$ are as follows.
$$\alpha\cdot\beta=\begin{ytableau}\none & \none & \none & *(gray) & *(gray) \\ \none & \none & \none & *(gray) \\ \none & \none & & \\ $ $ & &  \end{ytableau}\qquad \alpha\odot \beta = \begin{ytableau}\none & \none & \none & \none & *(gray) & *(gray)\\ \none & \none & & & *(gray)\\ $ $ & & \end{ytableau}$$
The operation $\alpha\circ\beta$ replaces each square of $\alpha$ with a copy of $\beta$. The copies of $\beta$ are near-concatenated if the corresponding blocks of $\alpha$ are horizontally adjacent and concatenated if the corresponding blocks of $\alpha$ are vertically adjacent. 

$$\alpha\circ\beta=\begin{ytableau}\none & \none & \none & \none & \none & \none & \none & *(gray)& *(gray)\\ \none & \none & \none & \none & \none & & & *(gray) \\ \none & \none & \none & \none & \none & \\ \none & \none & \none & \none &*(gray) & *(gray)\\ \none & \none & & & *(gray) \\ *(gray) $ $ & *(gray)&  \\ *(gray) $ $ \end{ytableau}$$

\end{ex}

If a ribbon $\alpha$ can be written in the form $\alpha = \beta_1 \circ \cdots \circ \beta_{\ell}$, we call this a \textit{factorization} of $\alpha$. A factorization $\alpha = \beta \circ \gamma$ is called \textit{trivial} if any of  the following conditions hold:
\begin{enumerate}
\item one of $\beta$ or $\gamma$ consists of a single square,
\item both $\beta$ and $\gamma$ consist of a single row, or
\item both $\beta$ and $\gamma$ consist of a single column.
\end{enumerate}

A factorization $\alpha = \beta_1 \circ \cdots \circ \beta_{\ell}$ is called \textit{irreducible} if none of the factorizations $\beta_i \circ \beta_{i+1}$ are trivial and each $\beta_i$ has no nontrivial factorization. In \cite{billera2006decomposable}, the authors prove that every ribbon $\alpha$ has a unique irreducible factorization. They then prove the following theorem:

\begin{thm}\cite{billera2006decomposable}\label{irred_fact}
Two ribbons $\alpha$ and $\beta$ satisfy $s_{\alpha} = s_{\beta}$ if and only if $\alpha$ and $\beta$ have irreducible factorizations
$$\alpha = \alpha_1 \circ \cdots \circ \alpha_k \quad \textrm{and} \quad \beta = \beta_1 \circ \cdots \circ \beta_k,$$
where each $\beta_i$ is equal to either $\alpha_i$ or $\alpha_i^{*}$.
\end{thm}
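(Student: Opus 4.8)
The statement is a biconditional, so I would prove the two directions separately, with most of the work in the forward (``only if'') implication. Throughout I will use three structural facts. First, $\circ$ is associative on compositions; this is a direct check from the cell-replacement description. Second, reversal is compatible with composition, $(\alpha\circ\beta)^{*}=\alpha^{*}\circ\beta^{*}$, again a combinatorial check on the diagrams. Third, $s_{\gamma}=s_{\gamma^{*}}$ for every ribbon $\gamma$: this follows from exactly the involution used in the Proposition above (rotate the tableau by $180^{\circ}$ and replace each entry $j$ by $i_{k}+1-j$), now restricted to the strictly-column-increasing fillings that compute $s$.

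For the ``if'' direction, the engine is a \emph{reversal-invariance lemma}: for all ribbons $\alpha,\beta$ one has $s_{\alpha\circ\beta}=s_{\alpha^{*}\circ\beta}=s_{\alpha\circ\beta^{*}}$. I would prove this by expanding ribbon Schur functions in the complete homogeneous basis via the ribbon Jacobi--Trudi expansion $s_{\alpha}=\sum_{\gamma}(-1)^{\ell(\alpha)-\ell(\gamma)}h_{\gamma}$, where $\ell$ denotes the number of parts, $h_{\gamma}=h_{\gamma_{1}}h_{\gamma_{2}}\cdots$, and $\gamma$ ranges over the coarsenings of $\alpha$ obtained by merging adjacent parts. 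The next step is to check that the cell-replacement operation $\circ$ corresponds to a substitution of the generating functions attached to $\alpha$ and $\beta$ in the $h_{i}$; the desired invariance under reversing either argument then drops out of the symmetry of that substitution together with $s_{\gamma}=s_{\gamma^{*}}$. Granting the lemma, ``if'' is immediate: given matching irreducible factorizations with each $\beta_{i}\in\{\alpha_{i},\alpha_{i}^{*}\}$, associativity of $\circ$ lets me replace one factor at a time, and each replacement preserves the Schur function by the lemma.

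The ``only if'' direction is the crux, and here I would lean on the cited uniqueness of irreducible factorizations and induct on the number of factors $k$. The base case $k=1$ asserts that two irreducible ribbons with equal Schur functions must be equal or reverse; I would isolate this as a separate lemma, proved by extracting from $s_{\alpha}$ an invariant fine enough to pin down an irreducible $\alpha$ up to reversal. For the inductive step the task is to ``read off'' a boundary irreducible factor of $\alpha$ directly from $s_{\alpha}$, so that $s_{\alpha}=s_{\beta}$ forces those leading factors to agree up to reversal; I would then cancel that factor and recurse. As supporting structure for making ``read off'' precise, I would lift to the noncommutative symmetric functions, where the ribbon basis multiplies by $R_{\alpha}R_{\beta}=R_{\alpha\cdot\beta}+R_{\alpha\odot\beta}$ and the abelianization sends $R_{\alpha}\mapsto s_{\alpha}$; since $\alpha\circ\beta=\beta^{\odot\alpha_{1}}\cdot\beta^{\odot\alpha_{2}}\cdots\beta^{\odot\alpha_{k}}$ is built entirely from $\cdot$ and $\odot$, the operations visible in this product are exactly the ones I need to control.

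The main obstacle is precisely this rigidity in the ``only if'' direction. A priori many distinct composites $\beta_{1}\circ\cdots\circ\beta_{k}$ could collapse to the same symmetric function, so the real content is to produce an invariant of $s_{\alpha}$ sharp enough to reconstruct the whole irreducible factorization up to factorwise reversal. Equivalently, the abelianization from noncommutative to ordinary symmetric functions is far from injective, and the heart of the proof is to show that the only relations it introduces among ribbons are those generated by $s_{\gamma}=s_{\gamma^{*}}$ and its interaction with $\circ$; quantifying that kernel is where I expect the genuine difficulty to lie.
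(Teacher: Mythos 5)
The paper does not prove this statement at all: it is quoted directly from Billera--Thomas--van Willigenburg \cite{billera2006decomposable}, so there is no internal proof to compare yours against. Judged on its own terms, your proposal correctly identifies the architecture of the known argument: the reversal-invariance lemma $s_{\alpha\circ\beta}=s_{\alpha^{*}\circ\beta}=s_{\alpha\circ\beta^{*}}$ for the ``if'' direction, and an induction on the number of irreducible factors, supported by the noncommutative lift where $R_{\alpha}R_{\beta}=R_{\alpha\cdot\beta}+R_{\alpha\odot\beta}$, for the ``only if'' direction. The ``if'' half is essentially complete modulo the substitution computation you describe, which does go through, together with associativity of $\circ$ and $(\alpha\circ\beta)^{*}=\alpha^{*}\circ\beta^{*}$.

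The ``only if'' direction, however --- which you yourself flag as the crux --- is described but not proved, and two essential steps are missing. First, the base case $k=1$ (two irreducible ribbons with equal Schur functions are equal or antipodal) is reduced to ``extracting an invariant fine enough to pin down an irreducible $\alpha$ up to reversal,'' but no such invariant is exhibited; this is a substantial theorem in its own right, resting in \cite{billera2006decomposable} on a careful analysis of the coarsenings of $\alpha$ appearing in the expansion $s_{\alpha}=\sum_{\gamma}(-1)^{\ell(\alpha)-\ell(\gamma)}h_{\gamma}$. Second, the inductive step requires ``reading off'' an outermost irreducible factor of $\alpha$ from $s_{\alpha}$ alone; this is precisely the assertion that the only coincidences the abelianization $R_{\alpha}\mapsto s_{\alpha}$ introduces among ribbons are those generated by $\gamma\mapsto\gamma^{*}$ and its interaction with $\circ$, and you correctly observe that controlling this kernel is the genuine difficulty --- but locating the difficulty is not the same as resolving it. As written, the proposal is an accurate road map of the Billera--Thomas--van Willigenburg proof rather than a proof.
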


In the next section, we use the above theorem to prove a necessary and sufficient condition for two ribbons to be $g$-equivalent. We also provide a necessary condition for two skew shapes to be $g$-equivalent.

\section{Coincidences of Dual Stable Grothendieck Polynomials}

\subsection{Ribbons}
The main result of this section is that for two ribbons $\alpha$ and $\beta$, $g_\alpha = g_\beta$ if and only if $\alpha = \beta$ or $\alpha = \beta^*$. We will obtain restrictions on $\alpha$ and $\beta$ by writing the dual stable Grothendieck polynomials in terms of ribbon Schur functions and comparing the coefficients in the resulting expansions.

The next proposition requires the following ordering on ribbons. 
For ribbons $\alpha = [\alpha_1,\ldots,\alpha_n]$ and $\gamma=[\gamma_1,\ldots,\gamma_n]$ with the same number of columns, we write $\gamma\leq{\alpha}$ if $\gamma_i\leq{\alpha_i}$ for each $i = 1, \ldots, n$.

\begin{prop} \label{ribbon_expand}
Let $\alpha = [\alpha_1,\ldots,\alpha_n]$ be a ribbon. The dual stable Grothendieck polynomial $g_\alpha$ can be decomposed into a sum of ribbon Schur functions as
\[
g_\alpha = \sum_{\gamma \leq \alpha} \left( \prod_{i=1}^{n} \binom{\alpha_i-1}{\alpha_i-\gamma_i} \right) s_\gamma .
\]
\end{prop}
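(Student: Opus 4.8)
The plan is to set up a weight-preserving bijection between reverse plane partitions of shape $\alpha$ and pairs consisting of a semistandard Young tableau $U$ of some shape $\gamma \le \alpha$ together with, for each column, a record of its entry-multiplicities; summing the weight over the fibers of this bijection will produce exactly the claimed coefficients. The starting observation is that $x^P = \prod_i x_i^{m_i}$ depends only on the \emph{support} (set of distinct entries) of each column of $P$, not on how often each entry is repeated within a column.

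First I would define the forward map $P \mapsto U$ that compresses each column of $P$ to its set of distinct entries, listed in increasing order from top to bottom; writing $\gamma_i$ for the number of distinct entries in column $i$, the result has column-heights $\gamma = [\gamma_1,\ldots,\gamma_n]$ with $1 \le \gamma_i \le \alpha_i$, so $\gamma \le \alpha$, and $\gamma$ is again a ribbon by the column-reading bijection. Every column of $U$ is strictly increasing by construction, and the inverse map re-expands column $i$ by repeating its $j$-th entry some positive number $c^{(i)}_j$ of times, where the $c^{(i)}$ are precisely compositions of $\alpha_i$ into $\gamma_i$ positive parts. The step I expect to be the main obstacle is verifying that $U$ is always a genuine SSYT and that every expansion yields a genuine RPP, and this rests on a structural feature of ribbons. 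Because a ribbon is connected and contains no $2\times 2$ square, consecutive columns $i$ and $i+1$ meet in exactly one row, and since the shape ascends to the right this shared row is simultaneously the top row of column $i$ and the bottom row of column $i+1$. Hence the only horizontal comparison linking the two columns is the single inequality $\min(\mathrm{col}\ i) \le \max(\mathrm{col}\ i{+}1)$, where the minimum is the top entry and the maximum the bottom entry. Compression and expansion both leave the top and bottom entry of every column fixed, so they preserve each column's minimum and maximum; the same reduction applies in $\gamma$, whose column adjacencies are ascents for the same reason. Therefore the horizontal inequality holds for $P$ if and only if it holds for $U$, the two maps are mutually inverse, and—crucially—the expansions of distinct columns may be chosen completely independently, since each one leaves every relevant minimum and maximum unchanged.

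Finally I would assemble the enumeration. For a fixed SSYT $U$ of shape $\gamma$, the RPPs of shape $\alpha$ compressing to $U$ correspond bijectively to tuples $(c^{(1)},\ldots,c^{(n)})$ of compositions, and the number of compositions of $\alpha_i$ into $\gamma_i$ positive parts is $\binom{\alpha_i-1}{\gamma_i-1} = \binom{\alpha_i-1}{\alpha_i-\gamma_i}$, giving $\prod_{i=1}^n \binom{\alpha_i-1}{\alpha_i-\gamma_i}$ such RPPs, each of weight $x^U$ (the composition data carries no weight). Since in an SSYT each column contains a given value at most once, the number of columns of $U$ containing a value equals the number of cells containing it, so the RPP-weight of $U$ agrees with its ordinary tableau weight $x^U$, which in turn equals $x^P$. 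Summing over all $U$ and all $\gamma \le \alpha$ then gives
\[
g_\alpha = \sum_{P} x^P = \sum_{\gamma \le \alpha}\left(\prod_{i=1}^n \binom{\alpha_i-1}{\alpha_i-\gamma_i}\right)\sum_{U \text{ of shape } \gamma} x^U = \sum_{\gamma \le \alpha}\left(\prod_{i=1}^n \binom{\alpha_i-1}{\alpha_i-\gamma_i}\right) s_\gamma,
\]
as desired.
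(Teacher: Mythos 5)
Your proposal is correct and follows essentially the same route as the paper: compress each column of an RPP to its set of distinct entries to get an SSYT of shape $\gamma\le\alpha$, observe the map is weight-preserving and surjective, and count the fiber over each SSYT as $\prod_i\binom{\alpha_i-1}{\alpha_i-\gamma_i}$ (the paper phrases this as choosing which adjacent pairs in a column coincide, you phrase it as compositions of $\alpha_i$ into $\gamma_i$ parts — the same count). Your extra care in reducing the horizontal condition between adjacent ribbon columns to the single inequality $\min(\mathrm{col}\ i)\le\max(\mathrm{col}\ i{+}1)$ is a welcome elaboration of a step the paper treats as clear.
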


\begin{proof}
We define a map from reverse plane partitions of ribbon shape $\alpha$ to the set of semistandard Young tableaux of shape $\gamma$ where $\gamma \leq \alpha$. Given a reverse plane partition $T$ of shape $\alpha$, map $T$ to a semistandard Young tableau of shape $\gamma = [\gamma_1,\ldots.\gamma_n]$ where $\gamma_i$ is the number of distinct entries in column $i$ in $T$. Fill column $i$ of $\gamma$ with the distinct entries of column $i$ in $T$ in increasing order. This gives a semistandard Young tableau because columns are clearly strictly increasing and rows will remain weakly increasing. 

This map preserves the monomial corresponding to the reverse plane partition. The map is also surjective, since any semistandard Young tableau of shape $\gamma$ where $\gamma \leq \alpha$ is mapped to by any reverse plane partition with the same entries in each column but with some entries copied.

It remains to show each semistandard Young tableau is mapped to by exactly $\prod \binom{\alpha_i-1}{\alpha_i-\gamma_i}$ reverse plane partitions. Fix some semistandard Young tableau of shape $\gamma \leq \alpha$. We construct all possible reverse plane partitions of $\alpha$ mapping to this semistandard Young tableau column by column. Given column $i$ of $\alpha$, consider the $\alpha_i-1$ pairs of adjacent squares in the column. Since there are $\gamma_i$ distinct entries in the column and the entries are written in weakly increasing order, $\alpha_i - \gamma_i$ of these pairs must match. A size $(\alpha_i-\gamma_i)$ subset of the $\alpha_i-1$ pairs of adjacent squares gives a unique filling, where the given subset is the set of adjacent squares that match. Thus the number of possible fillings for each column is $\binom{\alpha_i-1}{\alpha_i-\gamma_i}$, giving the desired formula. 
\end{proof}

\begin{lem}
\label{ribbon-col-sum}
Let $\alpha = [\alpha_1, \ldots, \alpha_n]$ and $\beta = [\beta_1, \ldots, \beta_n]$ be ribbons such that $g_\alpha = g_\beta$. Then for all $i=1,\ldots,n$ we have $\alpha_i + \alpha_{n-i+1} = \beta_i + \beta_{n-i+1}$.
\end{lem}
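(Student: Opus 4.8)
The plan is to read off the statement from a single, carefully chosen low-degree homogeneous component of $g_\alpha$, picked so that the ribbon Schur functions occurring in it are genuinely linearly independent. Since $\alpha$ and $\beta$ both have $n$ columns, Proposition \ref{ribbon_expand} shows that the lowest-degree term of $g_\alpha$ is $s_{[1^n]} = s_{(n)}$ in degree $n$, which carries no information, so I would pass to the degree-$(n+1)$ component. The compositions $\gamma \le \alpha$ with $|\gamma| = n+1$ are exactly those with one column equal to $2$ and all others equal to $1$; writing $\gamma^{[j]} = [1^{j-1},2,1^{n-j}]$ for the shape with the $2$ in column $j$, its coefficient in Proposition \ref{ribbon_expand} is $\binom{\alpha_j-1}{1} = \alpha_j - 1$, which conveniently vanishes precisely when $\alpha_j = 1$, so the illegal shapes drop out automatically. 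Hence
\[
[g_\alpha]_{n+1} \;=\; \sum_{j=1}^{n} (\alpha_j - 1)\, s_{\gamma^{[j]}} .
\]

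The next step is to identify each $\gamma^{[j]}$ as a two-row ribbon: tracking how the columns connect shows it is the ribbon whose two rows have lengths $j$ and $n+1-j$ overlapping in a single column, so $s_{\gamma^{[j]}} = s_{(j,\,n+1-j)}$, and in particular $s_{\gamma^{[j]}} = s_{\gamma^{[n+1-j]}}$, reflecting antipodal symmetry. I would then invoke the two-row Jacobi–Trudi (Pieri) identity $s_{(a,b)} = h_a h_b - h_{a+b}$, where $h_k$ is the complete homogeneous symmetric function, to rewrite $s_{(j,\,n+1-j)} = h_j h_{n+1-j} - h_{n+1}$. Because the products $h_j h_{n+1-j} = h_{(n+1-j,\,j)}$ are indexed by pairwise distinct partitions of $n+1$ as $j$ ranges over $1 \le j \le \lfloor (n+1)/2\rfloor$, all distinct from $(n+1)$ itself, and the $h_\lambda$ form a basis of the degree-$(n+1)$ symmetric functions, the distinct functions $s_{(j,\,n+1-j)}$ are linearly independent.

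Finally, $g_\alpha = g_\beta$ forces $[g_\alpha]_{n+1} = [g_\beta]_{n+1}$. Collecting the terms of $\sum_j (\alpha_j - 1)s_{(j,\,n+1-j)}$ according to the distinct ribbons and using the linear independence just established, I would equate coefficients pair by pair: for each unordered pair $\{j,\,n+1-j\}$ with $j \ne n+1-j$ this gives $(\alpha_j-1)+(\alpha_{n+1-j}-1) = (\beta_j-1)+(\beta_{n+1-j}-1)$, and for the central self-paired index (when $n$ is odd) it gives $\alpha_j - 1 = \beta_j - 1$; either way $\alpha_i + \alpha_{n-i+1} = \beta_i + \beta_{n-i+1}$, and the symmetry of this identity under $i \mapsto n+1-i$ extends it to all $i$. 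The one genuine obstacle is the linear-independence step: ribbon Schur functions are in general highly dependent (this is exactly the phenomenon Theorem \ref{irred_fact} describes), so it is essential that the degree-$(n+1)$ piece involves only two-row ribbons, for which independence is transparent from the $h_\lambda$ basis; the analogous computation at the top of the polynomial would instead produce generic ribbons, where no such independence is available.
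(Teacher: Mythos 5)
Your proof is correct and follows essentially the same route as the paper's: expand $g_\alpha$ via Proposition \ref{ribbon_expand}, isolate the degree-$(n+1)$ component (which involves only the two-row ribbons $(j,\,n+1-j)$ with coefficients $\alpha_j-1$), establish linear independence of the distinct two-row ribbon Schur functions, and compare coefficients pairwise. The only difference is cosmetic: the paper cites Proposition 2.2 of \cite{billera2006decomposable} for the linear independence, whereas you derive it directly from $s_{(a,b)} = h_a h_b - h_{a+b}$ and the $h_\lambda$ basis, which makes that step self-contained.
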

\begin{proof}
Use Proposition \ref{ribbon_expand} to write $g_\alpha$ and $g_\beta$ as a sum of ribbon Schur functions. Note that all terms of degree $n+1$ in both sums are of the form $s_\gamma$ where $\gamma$ is a ribbon $(i,n-i+1)$. Let $A$ denote the set of all compositions of $n+1$ with weakly decreasing parts (i.e. the set of partitions of $n+1$). It is shown in Proposition 2.2 of \cite{billera2006decomposable} that the set $\lbrace{s_\alpha}\rbrace_{\alpha\in A}$ forms a basis for $\Lambda_{n+1}$, the degree $n+1$ elements of the ring of symmetric functions. Then since each ribbon $(i,n-i+1)$ is Schur equivalent to $(n-i+1,i)$, it follows that the set of Schur functions of such ribbons is linearly independent. Comparing coefficients in the respective sums gives the desired equality.
\end{proof}

\begin{lem}\label{factor_sym}
Suppose $\alpha$ and $\beta$ are ribbons such that $g_{\alpha} = g_{\beta}$, $\alpha \neq \beta$, and there exist ribbons $\sigma$, $\tau$, and $\mu$ such that $\alpha = \sigma \circ \mu$ and $\beta =  \tau \circ \mu$. Then $\mu = \mu^*$.
\end{lem}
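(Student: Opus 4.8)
The plan is to use the two combinatorial consequences of $g$-equivalence proved above, with the column-sum identity of Lemma \ref{ribbon-col-sum} doing the real work. From $g_\alpha=g_\beta$ we get $s_\alpha=s_\beta$, so Theorem \ref{irred_fact} already constrains the irreducible factorizations of $\alpha$ and $\beta$ to agree up to reversing individual factors; but the key tool is Lemma \ref{ribbon-col-sum}, which gives $\alpha_i+\alpha_{n-i+1}=\beta_i+\beta_{n-i+1}$ for the column compositions $[\alpha_1,\dots,\alpha_n]$ and $[\beta_1,\dots,\beta_n]$. Since $\rho\circ\mu$ determines $\rho$ for a fixed $\mu$, the hypothesis $\alpha\neq\beta$ is the same as $\sigma\neq\tau$, and I would argue by contraposition: assuming $\mu\neq\mu^*$, I will show the column-sum identity forces $\sigma=\tau$.

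First I would record how $\circ$ acts on column compositions. Writing $C=[c_1,\dots,c_m]$ for the column composition of $\mu$, near-concatenation $\odot$ appends column compositions while concatenation $\cdot$ glues along one shared column, replacing a trailing $c_m$ and a leading $c_1$ by the single entry $c_m+c_1$. Unwinding $\sigma\circ\mu=\mu^{\odot\sigma_1}\cdot\mu^{\odot\sigma_2}\cdots\mu^{\odot\sigma_k}$, the column composition of $\sigma\circ\mu$ is $|\sigma|$ consecutive copies of $C$, grouped into blocks of sizes $\sigma_1,\dots,\sigma_k$, in which the boundary column between two consecutive blocks is merged into $c_m+c_1$ while copies inside a block are merely appended. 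Thus $\sigma$ is encoded by its set of block boundaries $S_\sigma\subseteq\{1,\dots,|\sigma|-1\}$, and likewise $\tau$ by $S_\tau$; equality of size and of the number of columns forces $|\sigma|=|\tau|$ and $|S_\sigma|=|S_\tau|$. Because $(\sigma\circ\mu)^*=\sigma^*\circ\mu^*$, reversing this column composition yields the analogous sequence built from the reversed period $C^*=[c_m,\dots,c_1]$.

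With this dictionary the column-sum identity says exactly that the difference $\delta$ of the column compositions of $\alpha$ and $\beta$ is anti-palindromic, $\delta_i=-\delta_{n-i+1}$. Away from merged boundaries both compositions are the same periodic repetition of $C$, so $\delta$ is driven by the boundaries where $S_\sigma$ and $S_\tau$ disagree; crucially, each unmatched merge also shifts the phase of every entry to its right by one, so $\delta$ is not locally supported. The crux is to show that when $C$ is not a palindrome such an anti-palindromic $\delta$ must vanish, forcing $S_\sigma=S_\tau$ and hence $\sigma=\tau$. I expect to do this by a minimal-counterexample argument on $|S_\sigma\triangle S_\tau|$: peel off the outermost boundary at which $S_\sigma$ and $S_\tau$ differ, and observe that the phase shift it introduces exposes an entry of $C$ against the reversed period $C^*$ that the anti-palindromic condition cannot reconcile unless $C=C^*$. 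When $C=C^*$ this mechanism degenerates, matching the fact that then $\sigma$ and $\sigma^*$ already give $g$-equivalent distinct shapes.

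The main obstacle is precisely this last step. The merges do not act locally: a single unmatched boundary reindexes the entire tail of the column composition, so comparing $\sigma\circ\mu$ with $\tau\circ\mu$ amounts to comparing two near-periodic sequences whose phase defects interact from both ends at once. Disentangling this two-sided interaction and pinning down one position at which the non-palindromicity of $C$ is forced to surface is the delicate part, and is exactly where the hypothesis $\mu\neq\mu^*$ is genuinely used.
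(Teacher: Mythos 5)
Your setup is the right one --- passing to column compositions, observing that $\sigma\circ\mu$ is $|\sigma|$ copies of $C=[\mu_1,\dots,\mu_t]$ with the columns at certain block boundaries merged into $\mu_t+\mu_1$, and feeding this into the column-sum identity of Lemma \ref{ribbon-col-sum} --- and this is exactly the framework the paper uses. But the proof is not complete: the entire content of the lemma lives in the step you defer, namely showing that an anti-palindromic difference $\delta$ between two such near-periodic sequences forces either $S_\sigma=S_\tau$ or $C=C^*$. You describe a minimal-counterexample argument on $|S_\sigma\triangle S_\tau|$ and then explicitly flag the two-sided phase interaction as ``the main obstacle'' and ``the delicate part.'' That is an acknowledgment that the key claim is unproved, not a proof of it; as written, nothing rules out a nontrivial anti-palindromic $\delta$ for non-palindromic $C$.

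The gap can be closed much more directly than by your proposed contradiction scheme, and this is what the paper does. Let $r$ be the \emph{first} index with $\alpha_r\neq\beta_r$. Since both sequences agree up to that point, the disagreement must occur at the first boundary where the merge patterns differ, so $\{\alpha_r,\beta_r\}=\{\mu_t,\mu_1+\mu_t\}$. Lemma \ref{ribbon-col-sum} forces the \emph{last} disagreement to sit at index $n-r+1$, where by the mirror of the same reasoning $\{\alpha_{n-r+1},\beta_{n-r+1}\}=\{\mu_1,\mu_1+\mu_t\}$. Plugging these values into $\alpha_r+\alpha_{n-r+1}=\beta_r+\beta_{n-r+1}$ leaves only the possibility $2\mu_1+\mu_t=\mu_1+2\mu_t$, i.e.\ $\mu_1=\mu_t$; one then walks inward, comparing positions $r+i$ and $n-r-i+1$ (where the two sequences are a known single phase apart, $\alpha_{r+i}=\mu_i$ versus $\beta_{r+i}=\mu_{i+1}$, etc.), and an easy induction yields $\mu_i=\mu_{t-i+1}$ for all $i$. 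In other words, rather than trying to show $\delta=0$ under the assumption $C\neq C^*$, one reads the palindromicity of $C$ off directly from the entries at the outermost pair of disagreement positions --- the phase-shift phenomenon you worry about is precisely what produces the telescoping identities $\mu_i+\mu_{t-i}=\mu_{i+1}+\mu_{t-i+1}$ rather than being an obstacle.
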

\begin{proof}
Let $\mu = [\mu_1,\ldots,\mu_t]$, $\alpha = [\alpha_1,\ldots,\alpha_n]$, and $\beta = [\beta_1,\ldots,\beta_n]$. By hypothesis, we have that $\alpha = \mu \square_1 \cdots \square_r \mu$ and $\beta = \mu \diamond_1 \cdots \diamond_s \mu$, where each $\square_i$ and $\diamond_i$ is one of the operations $\cdot$ or $\odot$.  Thus each $\alpha_i$ and $\beta_i$ is equal to one of $\mu_1,\ldots,\mu_t$ or $\mu_1 + \mu_t$. Since $\alpha \neq \beta$, let $r$ be the minimal index such that $\alpha_r \neq \beta_r$. We see that $\{\alpha_r,\beta_r\} = \{\mu_t,\mu_1 + \mu_t \}$ because the first index where $\alpha$ and $\beta$ disagree corresponds to the first index $i$ where $\square_i \neq \diamond_i$. By Lemma \ref{ribbon-col-sum} it follows that if $\alpha_i = \beta_i$ then $\alpha_{n-i+1} = \beta_{n-i+1}$. Hence $n-r+1$ is the maximal index where $\alpha$ and $\beta$ disagree. Note that by the same argument we similarly have $\{\alpha_{n-r+1},\beta_{n-r+1}\} = \{\mu_1,\mu_1 + \mu_t \}$. 

We have $\alpha_r + \alpha_{n-r+1} = \beta_r + \beta_{n-r+1}$ by Lemma \ref{ribbon-col-sum}. Substituting the possible values of $\alpha_r \neq \beta_r$ and $\alpha_{n-r+1} \neq \beta_{n-r+1}$, we find that this equation is either
$$\mu_1 + \mu_t = 2(\mu_1 + \mu_t)$$
or
$$2\mu_1 + \mu_t = \mu_1 + 2\mu_t.$$
The first equation is a contradiction. Thus the second equation holds, implying that $\mu_1 = \mu_t$. We will show by induction that $\mu_i = \mu_{t - i + 1}$, completing the proof. We have just shown the base case. For the general case, we have by Lemma \ref{ribbon-col-sum}
$$\alpha_{r + i} + \alpha_{n - r - i + 1} = \beta_{r + i} + \beta_{n - r - i + 1}.$$
We may assume without loss of generality that $\alpha_r = \mu_t$. Then we have $\alpha_{n - r + 1} = \mu_1 + \mu_t$, $\beta_r = \mu_1 + \mu_t$ and $\beta_{n - r + 1} = \mu_1$. Therefore 
\begin{equation*}
\begin{aligned}
\alpha_{r + i} &= \mu_i \\
\beta_{r + i} &= \mu_{i + 1} \\
\alpha_{n - r - i + 1} &= \mu_{t - i} \\
\beta_{n - r - i + 1} &= \mu_{t - i + 1}.
\end{aligned}
\end{equation*}
We thus have
$$\mu_i + \mu_{t - i} = \mu_{i + 1} + \mu_{t - i +1}.$$
By the inductive hypothesis $\mu_i = \mu_{t - i + 1}$, so $\mu_{i + 1} = \mu_{t - i}$, finishing the proof.
\end{proof}

We are now ready for the main result of this section. 

\begin{thm} \label{thm:littlegribbon}
For ribbons $\alpha$ and $\beta$, $g_\alpha = g_\beta$ if and only if $\alpha = \beta$ or $\alpha = \beta^*$.
\end{thm}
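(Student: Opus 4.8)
The plan is to prove the nontrivial ("only if") direction; the "if" direction is immediate, since $g_\alpha=g_\beta$ when $\alpha=\beta$, and when $\alpha=\beta^*$ we get $g_\alpha=g_{\beta^*}=g_\beta$ by the antipodal-rotation proposition established above. So assume $g_\alpha=g_\beta$ with $\alpha\neq\beta$; the goal is to show $\alpha=\beta^*$. Since $s_{\lambda/\mu}$ is the top-degree part of $g_{\lambda/\mu}$, the equality $g_\alpha=g_\beta$ forces $s_\alpha=s_\beta$, so Theorem \ref{irred_fact} supplies irreducible factorizations $\alpha=\alpha_1\circ\cdots\circ\alpha_k$ and $\beta=\beta_1\circ\cdots\circ\beta_k$ with $\beta_i\in\{\alpha_i,\alpha_i^*\}$ for each $i$. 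I would call an index $i$ \emph{symmetric} if $\alpha_i=\alpha_i^*$ (so that the choice of $\beta_i$ is forced) and \emph{nonsymmetric} otherwise; among nonsymmetric indices set $F=\{i:\beta_i=\alpha_i^*\}$ (flipped) and $U=\{i:\beta_i=\alpha_i\}$ (unflipped). Then $\alpha=\beta$ iff $F=\emptyset$ and $\alpha=\beta^*$ iff $U=\emptyset$, so it suffices to rule out the \emph{mixed} case in which both $F$ and $U$ are nonempty.

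The engine is the following consequence of Lemma \ref{factor_sym}, which I would isolate as a claim: if $\gamma,\delta$ are ribbons with $g_\gamma=g_\delta$, $\gamma\neq\delta$, and with matching irreducible factorizations differing only by starring of factors, then the largest nonsymmetric factor index equals the largest index at which the two factorizations disagree. To prove it, let $\ell$ be the largest disagreeing index; for $i>\ell$ the factors agree, so $\gamma$ and $\delta$ share the right factor $\mu=\gamma_{\ell+1}\circ\cdots\circ\gamma_k$ (using associativity of $\circ$), and Lemma \ref{factor_sym} forces $\mu=\mu^*$. Here I would invoke the identity $(\eta\circ\theta)^*=\eta^*\circ\theta^*$ together with the uniqueness of irreducible factorizations to conclude that every factor beyond $\ell$ is symmetric; since index $\ell$ is itself nonsymmetric, it is then the largest nonsymmetric index, as claimed. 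The boundary case $\ell=k$ is vacuous and causes no trouble.

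Finally I would apply this claim twice. Applied to the pair $(\alpha,\beta)$, whose disagreement set is exactly $F$, it gives: if $F\neq\emptyset$, then $\max F$ is the largest nonsymmetric index. Applied to the pair $(\alpha,\beta^*)$ — which is legitimate because $g_{\beta^*}=g_\beta=g_\alpha$, because $\beta^*=\beta_1^*\circ\cdots\circ\beta_k^*$ is again an irreducible factorization differing from that of $\alpha$ only by starring, and because a direct check shows its disagreement set with $\alpha$ is exactly $U$ — it gives: if $U\neq\emptyset$, then $\max U$ is the largest nonsymmetric index. In the mixed case both $F$ and $U$ are nonempty, forcing $\max F=\max U$; but $F$ and $U$ are disjoint, so no single index can be the maximum of both, a contradiction. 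Hence $F=\emptyset$ or $U=\emptyset$, giving $\alpha=\beta$ or $\alpha=\beta^*$.

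I expect the main obstacle to be the bookkeeping around rotation and factorization that makes the isolated claim rigorous: verifying that antipodal rotation distributes over $\circ$ as $(\eta\circ\theta)^*=\eta^*\circ\theta^*$ (so that $\beta^*$ carries the stated factorization and its disagreement set with $\alpha$ is precisely $U$), and that uniqueness of the irreducible factorization upgrades $\mu=\mu^*$ to the statement that each individual factor of $\mu$ is symmetric. These are exactly the points where one must confirm that starring preserves both irreducibility of the factors and non-triviality of consecutive factorizations. Once those facts are in hand, the double application of Lemma \ref{factor_sym} closes the argument with no further computation, and Lemma \ref{ribbon-col-sum} enters only through its use inside the proof of Lemma \ref{factor_sym}.
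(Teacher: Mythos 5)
Your proof is correct and rests on the same two ingredients as the paper's: Theorem \ref{irred_fact} to match the irreducible factors up to reversal, and Lemma \ref{factor_sym} applied to the pairs $(\alpha,\beta)$ and $(\alpha,\beta^*)$ to force a common right $\circ$-factor to be self-reverse. The paper organizes the endgame as an induction showing each inner partial product $\alpha_r\circ\cdots\circ\alpha_1$ equals $\beta_r\circ\cdots\circ\beta_1$ up to $*$, whereas you run an extremal-index argument (the largest disagreeing index of each pair is the largest nonsymmetric factor, and the two disagreement sets $F$ and $U$ are disjoint); this is a reorganization of the same argument rather than a different route, and the verifications you flag --- $(\eta\circ\theta)^*=\eta^*\circ\theta^*$, preservation of irreducibility under $*$, and uniqueness of irreducible factorizations --- are exactly the facts the paper also uses implicitly.
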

\begin{proof}
Suppose $g_\alpha = g_\beta$. Then $s_\alpha = s_\beta$. By Theorem \ref{irred_fact}, we have irreducible factorizations 
\[
\alpha = \alpha_k \circ \cdots \circ \alpha_1
\]
\[
\beta = \beta_k \circ \cdots \circ \beta_1.
\] 
(We reverse the indices for ease of induction.) We prove by induction on $r$ that for $r = 1, \ldots, k$ we have
\[
\alpha_r \circ \cdots \circ \alpha_1 \in \{ \beta_r \circ \cdots \circ \beta_1, (\beta_r \circ \cdots \circ \beta_1)^* \}.
\]
By Theorem \ref{irred_fact} we have $\alpha_1 \in \{ \beta_1, \beta_1^* \}$ so the base case is satisfied. Now suppose $r \geq 2$. By the inductive hypothesis we have 
\[
\alpha_{r-1} \circ \cdots \circ \alpha_1 \in \{ \beta_{r-1} \circ \cdots \circ \beta_1, (\beta_{r-1} \circ \cdots \circ \beta_1)^* \}.
\]
If $\alpha = \beta$ we are done, so we may assume otherwise. Then by letting $\mu = \alpha_{r-1} \circ \cdots \circ \alpha_1$ and applying Lemma \ref{factor_sym} to $\alpha$ and either $\beta$ or $\beta^*$, we have
\[
\beta_{r-1} \circ \cdots \circ \beta_1 = (\beta_{r-1} \circ \cdots \circ \beta_1)^*.
\]
Since we also have that $\alpha_r \in \{ \beta_r, \beta_r^* \}$ we are done.
\end{proof}

\subsection{Necessary Condition: Bottlenecks}

We now move to the case of determining equality of dual stable Grothendieck polynomials of general skew shape. We introduce the ``bottleneck numbers'' of a skew diagram and use these to construct closed-form expressions for certain coefficients of its dual Grothendieck polynomial. We then obtain a necessary condition for $g$-equivalence that generalizes Lemma \ref{ribbon-col-sum}. 

For the following definition, we define an \textit{interior horizontal edge} to be a horizontal edge of a box in a Young diagram that lies neither at the top boundary nor the bottom boundary of the Young diagram.

\begin{defn}
A \textit{bottleneck edge} in a skew shape $\lambda/\mu$ is an interior horizontal edge touching both the left and right boundaries of the shape. For example, the red edges in Figure \ref{bottleneck_ex} are bottleneck edges. We let $b_i^{\lambda/\mu}$ denote the number of bottleneck edges in column $i$. 
\end{defn}

If the shape $\lambda/\mu$ has $n$ columns and $m$ rows, then number of bottleneck edges in column $i$ for $i = 1,2, \ldots , n$ is equivalently
\[
 b_i^{\lambda / \mu} = | \lbrace 1\leq{j}\leq{m-1} \mid \mu_j = i-1, \lambda_{j+1}=i \rbrace |.
\]
When the skew shape in question is clear, we will often suppress the superscript.

Bottleneck edges are related to the \textit{row overlap compositions} defined in \cite{rsw2009coincidences}, which we now review.

\begin{defn}[\cite{rsw2009coincidences}]
The $k$-\textit{row overlap composition} $r^{(k)}$ of a skew diagram $\lambda/\mu$ with $m$ rows is $(r^{(k)}_1,\ldots,r^{(k)}_{m-k+1})$, where $r^{(k)}_i$ is the number of columns containing squares in all the rows $i,i+1,\ldots,i+k-1$. 
\end{defn}

In particular, $r^{(2)} = (\lambda_2-\mu_1,\lambda_{3}-\mu_2,\ldots,\lambda_m - \mu_{m-1})$. Thus bottleneck edges correspond to 1's in the 2-row overlap composition. When the $2, 3, \ldots, m$ row overlap compositions are written, they form a triangular array of non-negative integers as shown in Example \ref{ex:rowoverlap}. A column having $i$ bottleneck edges corresponds in the array to an equilateral triangle of 1's with side length $i$. In \cite{rsw2009coincidences}, it is proven that the $k$-overlap compositions of two Schur equivalent shapes are permutations of each other for each $k$.

\begin{ex}\label{ex:rowoverlap}
Let $\lambda / \mu = \langle 5,5,4,2,2,2\rangle /\langle 4,2,1,1,1\rangle$.
\begin{figure}[h]
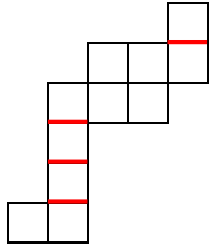

%\ytableausetup{notabloids}
\ytableausetup
{boxsize=1.25em}
\ytableausetup
{aligntableaux=top}
\begin{ytableau}
\none & \none & \none & \none & \emptytikzmark{6}{1} \\
\none & \none & \emptytikzmark{4}{1}  & \emptytikzmark{5}{1}  &  \\
\none & \emptytikzmark{3}{1} &   &  \\
\none & \emptytikzmark{2}{1}    \\
\none & \emptytikzmark{1}{1}    \\
      &   \\
\end{ytableau}

\DrawHLine[red, ultra thick]{1}{1}
\DrawHLine[red, ultra thick]{2}{2}
\DrawHLine[red, ultra thick]{3}{3}
\DrawHLine[red, ultra thick]{6}{6}

\caption{$\langle 5,5,4,2,2,2 \rangle / \langle 4,2,1,1,1 \rangle $ has 3 bottleneck edges in column 2 and 1 bottleneck edge in column 5.}\label{bottleneck_ex}
\end{figure}
Then the number of bottleneck edges in each column is shown below. $(b_1,b_2,b_3,b_4,b_5)=(0,3,0,0,1)$. The row overlap compositions $r^{(2)},\ldots,r^{(6)}$ are
\[\begin{tabular}{>{$}l<{$\hspace{12pt}}*{13}{c}}
r^{(6)} &&&&&&&0&&&&&&\\
r^{(5)} &&&&&&0&&0&&&&&\\
r^{(4)} &&&&&0&&0&&1&&&&\\
r^{(3)} &&&&0&&0&&1&&1&&&\\
r^{(2)} &&&1&&2&&1&&1&&1&&\\
\end{tabular}\]
\end{ex}

\begin{defn}
We define a \textit{1,2-RPP} to be a reverse plane partition involving only 1's and 2's. A \textit{mixed} column of a 1,2-RPP contains both 1's and 2's while an \textit{i-pure} column contains only $i$'s.
\end{defn}

Note the 1,2-RPP's of a given shape are in bijection with lattice paths from the upper right vertex of the shape to the lower left vertex of the shape. The corresponding 1,2-RPP can be generated from such a lattice path by filling the squares below the path with 2's and the squares above the path with 1's. Conversely, the corresponding lattice path can be recovered from a 1,2-RPP by drawing horizontal segments below the last 1 (if there are any) in a column and above the first 2 (if there are any) in a column. Vertical segments can then be drawn to connect these horizontal segments into a lattice path. Observe that mixed columns in the 1,2-RPP correspond to interior horizontal edges in the lattice path.

\begin{figure}[h]
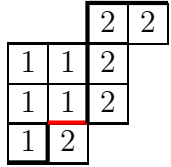

\ytableausetup
{boxsize=1.25em}
\ytableausetup
{aligntableaux=top}
\begin{ytableau}
\none & \none & \tikzmark{r4c3}{2} & \tikzmark{r4c4}{2}\\
1 & \tikzmark{r3c2}{1} & \tikzmark{r3c3}{2}\\
1 & \tikzmark{r2c2}{1} & \tikzmark{r2c3}{2}\\
\tikzmark{r1c1}{1} & 2
\end{ytableau}

\DrawHLine[black, ultra thick]{r1c1}{r1c1}
\DrawVLine[black, ultra thick]{r1c1}{r1c1}
\DrawHLine[red, ultra thick]{r2c2}{r2c2}
\DrawVLineLeft[black, ultra thick]{r2c3}{r4c3}
\DrawHLineAbove[black, ultra thick]{r4c3}{r4c4}

\caption{1,2-RPPs correspond to lattice paths inside the skew shape. Note the red interior horizontal edge corresponds to the boundary between the 1's and 2's in the mixed column.}
\end{figure}

\begin{thm} \label{bottleneck_cond} Let $\lambda/\mu$ be a skew shape with $n$ columns, and suppose $g_{\lambda / \mu} = g_{\gamma / \nu}$. Then
\[
 b_i^{\lambda / \mu}+b_{n-i+1}^{\lambda  / \mu} = b_i^{\gamma / \nu}+b_{n-i+1}^{\gamma / \nu}
\]
for $i=1,2,\ldots,n$. 
\end{thm}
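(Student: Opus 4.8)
The plan is to exploit the symmetry of $g_{\lambda/\mu}$ to convert the identity $g_{\lambda/\mu}=g_{\gamma/\nu}$ into equalities of individual monomial coefficients, and then to exhibit a family of monomials whose coefficients have a closed form in terms of the bottleneck numbers. Since $g$ is symmetric, for any fixed content the coefficient of the corresponding monomial in $g_{\lambda/\mu}$ equals the number of reverse plane partitions of shape $\lambda/\mu$ with that content, and this count is a $g$-invariant. The minimal-degree part (degree $n$) records only the column-constant fillings and hence encodes only the ``column poset'' of the shape (the $P$-partition data already visible through Proposition \ref{ribbon_expand} and Lemma \ref{ribbon-col-sum}); the bottleneck information must therefore be read off from fillings of degree $n+1$, in which exactly one column carries two distinct values and all other columns are constant.

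First I would recall from the lattice-path description that in any reverse plane partition each column is crossed by exactly one horizontal edge of the separating path, so a column is either pure or has a single switch from smaller to larger values, the interior switches being exactly the mixed columns. The key geometric point is that a switch in column $i$ can be forced to sit at a bottleneck edge: using the characterization $b_i^{\lambda/\mu}=|\{j:\mu_j=i-1,\ \lambda_{j+1}=i\}|$, a bottleneck edge is exactly a place where the shape pinches to a single column, so the part of the column above the edge is leftmost in its rows and the part below is rightmost in its rows. Building on this, for each index $i$ I would construct a \emph{probe} content: give the columns strictly increasing spacer values $2,3,\dots,n+1$, then replace the value of column $i$ by the pair $\{1,\,n+2\}$, i.e.\ a globally minimal value on top and a globally maximal value on the bottom. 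In any reverse plane partition with this content the unique two-valued column must carry $1$ over $n+2$; the top value $1$ forces its upper part to be leftmost in its rows, and the bottom value $n+2$ forces its lower part to be rightmost, so the switch must occur at a bottleneck edge. Conversely, each of the $b_i$ bottleneck edges of that column gives exactly one such filling (the isolated single-cell rows between stacked bottlenecks are forced to $1$ or to $n+2$), so switches in column $i$ contribute exactly $b_i$ realizations.

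The antipodal-rotation symmetry $g_{\lambda/\mu}=g_{(\lambda/\mu)^*}$, under which column $i$ is sent to column $n-i+1$ and the order of values is reversed, is what produces the summed form of the result: the same content is realized equally by forced switches in the mirror column $n-i+1$, so the probe coefficient should equal $b_i^{\lambda/\mu}+b_{n-i+1}^{\lambda/\mu}$. Equating this invariant across $g_{\lambda/\mu}=g_{\gamma/\nu}$ then yields the theorem. This specializes correctly to Lemma \ref{ribbon-col-sum}, since for ribbons $b_i=\alpha_i-1$, and I would expect the quadratic refinement in Corollary \ref{cor:trianglesums} to come from running the same argument with a two-column analogue of the probe that splits two columns simultaneously.

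The main obstacle I anticipate is controlling \emph{which} columns can realize the probe content, i.e.\ showing that no column other than $i$ and its mirror $n-i+1$ can be the two-valued column in a valid filling, so that the coefficient is exactly $b_i^{\lambda/\mu}+b_{n-i+1}^{\lambda/\mu}$ rather than a larger sum. Once the $1/(n+2)$ split is placed in some column $i'$, the remaining $n-1$ spacer values must be assigned order-preservingly to the other columns, and the single missing spacer value creates a ``gap'' that can in principle be absorbed at several incomparable column positions; the delicate part is proving, via a monotonicity/uniqueness argument on the column poset (and very likely by choosing the spacer values along a carefully adapted linear extension rather than the naive left-to-right one), that the gap can be absorbed only at the two symmetric positions $i$ and $n-i+1$. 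A secondary point to verify is the stacked-bottleneck case, a column with $b_i\ge 2$ corresponding to a triangle of $1$'s in the row-overlap array, where one must check that each bottleneck edge gives a distinct valid filling and that none of these collide with realizations supported on neighboring columns.
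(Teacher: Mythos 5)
There is a fatal flaw in your choice of probe, and it is precisely the symmetry of $g$ that kills it. Your probe content uses each of the values in $\{1,\dots,n+2\}\setminus\{i+1\}$ in exactly one column, so the corresponding monomial is square-free of degree $n+1$. Since $g_{\lambda/\mu}$ is a symmetric function, the coefficient of any square-free monomial in $n+1$ distinct variables equals the coefficient of $x_1x_2\cdots x_{n+1}$; in particular your probe coefficient is the same for every $i$. The ``gap'' at the omitted value $i+1$ is exactly the kind of information a symmetric function cannot see, so this family of probes cannot isolate $b_i^{\lambda/\mu}+b_{n-i+1}^{\lambda/\mu}$ as a function of $i$ --- the obstacle you flag at the end (controlling which column absorbs the gap) is not merely delicate but unresolvable. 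There is also a secondary combinatorial error: for a filling to place $1$ above $n+2$ across a bottleneck edge of column $c$, you need \emph{every} row of that column above the edge to begin at column $c$ and \emph{every} row below it to end at column $c$, which is much stronger than that single edge being a bottleneck, so even the local count at column $c$ is not $b_c$.

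The information you need sits in the exponents, not in which variables occur. The paper's proof computes the coefficient $t_r$ of $x_1^{r}x_2^{n-r+1}$ for varying $r\le k=\lceil n/2\rceil$, using exactly the lattice-path picture you describe: a $1,2$-RPP of this content has a unique interior horizontal edge, each of the $m-1$ heights contributes one base path, and a bottleneck edge in column $i$ contributes $\min(i,n-i+1,r)-1$ additional paths. Writing $f_i=b_i+b_{n-i+1}$, this gives
\[
t_r=(m-1)+f_2+2f_3+\cdots+(r-1)f_r+(r-1)f_{r+1}+\cdots+(r-1)f_k,
\]
so the second differences $2t_r-t_{r-1}-t_{r+1}=f_r$ recover $f_2,\dots,f_{k-1}$; then $f_k$ is recovered from $t_k$, and $f_1$ from the invariance of $b_1+\cdots+b_n$ under Schur (hence $g$-) equivalence. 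Note that only the symmetrized quantities $f_i$ are recoverable because the bottleneck contribution $\min(i,n-i+1,r)$ is itself symmetric in $i\leftrightarrow n-i+1$. If you want to salvage your strategy, you must vary the multiplicities of the values across columns, as in $x_1^{r}x_2^{n-r+1}$, rather than their labels.
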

Note that $\gamma/\nu$ must also have $n$ columns.
\begin{proof}
Fix a shape $\lambda / \mu$ with $m$ rows and $n$ columns. We will compute the coefficients for terms of the form $x_1^r x_2^{n-r+1}$ in $g_{\lambda/\mu}$. Since $g_{\lambda/\mu}$ is symmetric, we may assume without loss of generality that $r \leq n-r+1$. 

By the bijection between 1,2-RPP's and lattice paths given above, we may compute the coefficient of $x_1^r x_2^{n-r+1}$ by counting the number of lattice paths corresponding to this monomial. Note that any such lattice path must have exactly one interior horizontal edge. For each interior horizontal edge, we will count the number of lattice paths corresponding to the monomial $x_1^r x_2^{n-r+1}$ using the given edge. There are four cases: the interior horizontal edge either touches neither boundary, only the left boundary, only the right boundary, or both the left and right boundary (i.e. the edge is a bottleneck edge).

Fix an interior horizontal edge and suppose it lies in column $i$. Consider first the case where the interior horizontal edge touches neither boundary. Suppose a lattice path uses the given edge as its only interior horizontal edge. Then, as depicted in Figure \ref{no_boundary}, the lattice path must travel the top boundary until column $i$ and then drop to the horizontal edge. Then from the left endpoint of the given edge the path must drop to the bottom boundary and travel along the bottom boundary until reaching the bottom left. Hence there is a unique lattice path that uses the given edge as its only interior horizontal edge. Note that the corresponding 1,2-RPP has $i$ columns with 1's and $n-i+1$ columns with 2's. Thus the lattice path gives the monomial $x_1^r x_2^{n-r+1}$ exactly when the edge lies in column $r$.

\begin{figure}[h]
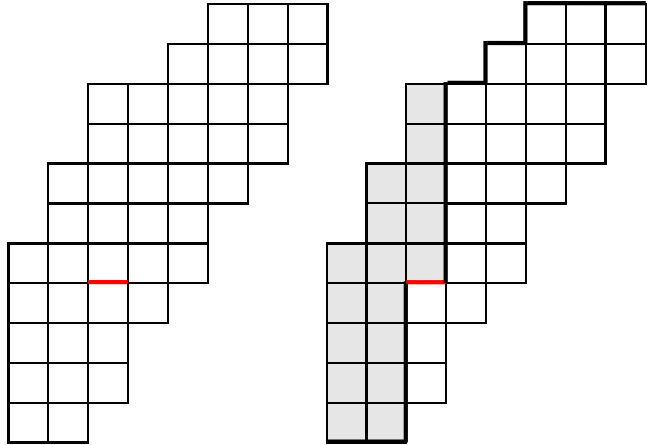

\[
\begin{ytableau}
\none & \none & \none & \none & \none &  &  &  \\
\none & \none & \none & \none & & & & \\
\none & \none & & & & & \\
\none & \none & & & & & \\
\none &  &    & & &  \\
\none &  &    & & \\
 	  &  &  \emptytikzmark{e}{1} & & \\
 	  &  &    & \\
 	  &  & \\
 	  &  & \\
 	  & 
\end{ytableau}
\DrawHLine[red, ultra thick]{e}{e}
\begin{ytableau}
\none & \none & \none & \none & \none & \emptytikzmark{13}{1} & \emptytikzmark{14}{1}  &  \emptytikzmark{15}{1} \\
\none & \none & \none & \none & \emptytikzmark{12}{1} & & & \\
\none & \none & *(mygray) \emptytikzmark{10}{1} & \emptytikzmark{11}{1} & & & \\
\none & \none & *(mygray)\emptytikzmark{9}{1} & & & & \\
\none 	  & *(mygray) & *(mygray)\emptytikzmark{8}{1}  & & &  \\
\none 	  & *(mygray) & *(mygray)\emptytikzmark{7}{1}   & & \\
*(mygray) & *(mygray) & *(mygray)\emptytikzmark{6}{1}   & & \\
*(mygray) & *(mygray)\emptytikzmark{5}{1} &    & \\
*(mygray) & *(mygray)\emptytikzmark{4}{1} & \\
*(mygray) & *(mygray)\emptytikzmark{3}{1} & \\
*(mygray) \emptytikzmark{1}{1} & *(mygray)\emptytikzmark{2}{1} 
\end{ytableau}
\]
\caption{Given an interior horizontal edge touching neither boundary, there is a unique lattice path with a single interior edge using the edge. If the edge lies in column $i$, the path contains $i$ columns with 1's and hence corresponds to the monomial $x_1^i x_2^{n-i+1}$.}\label{no_boundary}
\DrawHLine[black, ultra thick]{1}{2}
\DrawVLine[black, ultra thick]{2}{5}
\DrawHLine[red, ultra thick]{6}{6}
\DrawVLine[black, ultra thick]{6}{10}
\DrawHLineAbove[black, ultra thick]{11}{11}
\DrawVLineLeft[black, ultra thick]{12}{12}
\DrawHLineAbove[black, ultra thick]{12}{12}
\DrawVLineLeft[black, ultra thick]{13}{13}
\DrawHLineAbove[black, ultra thick]{13}{15}
\end{figure}

Next suppose the edge touches only the right boundary. Then as depicted in Figure \ref{right_boundary}, there may be multiple lattice paths using the edge: from the top right, the path may travel along the top boundary and drop down at any column before reaching column $i$. Note that the lattice path can correspond to a 1,2-RPP with between $i$ and $n$ columns containing 1's, and that the number of columns containing 1's determines the path. Similarly, if the edge touches only the left boundary then after reaching the edge the path can drop down to the bottom boundary at any column before $i$. Hence the lattice path can correspond to a 1,2-RPP with between $1$ and $i$ columns containing 1's.

\begin{figure}[h]
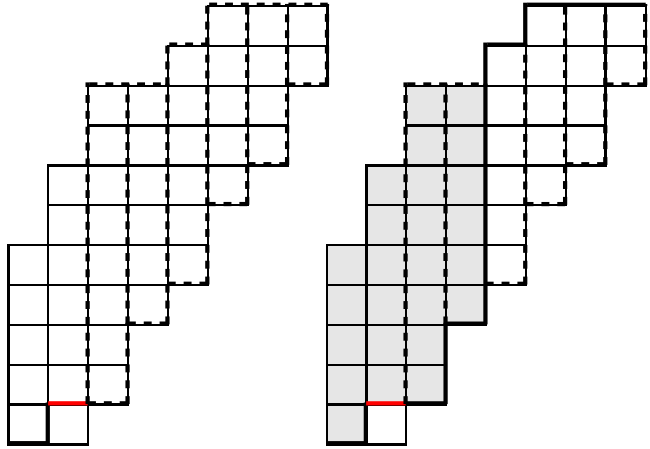

\[\begin{ytableau}
\none & \none & \none & \none & \none & \emptytikzmark{13}{1} & \emptytikzmark{14}{1}  &  \emptytikzmark{15}{1} \\
\none & \none & \none & \none & \emptytikzmark{12}{1} & & & \emptytikzmark{h}{1}\\
\none & \none & \emptytikzmark{10}{1} & \emptytikzmark{11}{1} & & & \\
\none & \none & \emptytikzmark{9}{1}  &  & & & \emptytikzmark{g}{1} \\
\none 	  &  & \emptytikzmark{8}{1}   &  &  & \emptytikzmark{f}{1}  \\
\none 	  &  & \emptytikzmark{7}{1}   &  &  \\
 &  & \emptytikzmark{6}{1}     &  & \emptytikzmark{e}{1} \\
 & \emptytikzmark{5}{1} & \emptytikzmark{c}{1}  & \emptytikzmark{d}{1} \\
 & \emptytikzmark{4}{1} & \emptytikzmark{b}{1}\\
 & \emptytikzmark{3}{1} & \emptytikzmark{a}{1} \\
 \emptytikzmark{1}{1} &  
\end{ytableau}
\DrawHLine[black, ultra thick]{1}{1}
\DrawVLine[black, ultra thick]{1}{1}
\DrawHLine[red, ultra thick]{3}{3}
\DrawHLine[black, dashed, ultra thick]{a}{a}
\DrawVLine[black, dashed, ultra thick]{a}{b}
\DrawHLine[black, dashed, ultra thick]{d}{d}
\DrawVLine[black, dashed, ultra thick]{d}{11}
\DrawVLineLeft[black, dashed, ultra thick]{12}{12}
\DrawHLineAbove[black, dashed, ultra thick]{12}{12}
\DrawVLineLeft[black, dashed, ultra thick]{13}{13}
\DrawHLineAbove[black, dashed, ultra thick]{13}{15}
\DrawVLine[black, dashed, ultra thick]{c}{10}
\DrawVLineLeft[black, dashed, ultra thick]{a}{10}
\DrawHLineAbove[black, dashed, ultra thick]{10}{11}
\DrawHLine[black, dashed, ultra thick]{e}{e}
\DrawVLine[black, dashed, ultra thick]{e}{12}
\DrawHLine[black, dashed, ultra thick]{f}{f}
\DrawVLine[black, dashed, ultra thick]{f}{13}
\DrawHLine[black, dashed, ultra thick]{g}{g}
\DrawVLine[black, dashed, ultra thick]{g}{14}
\DrawHLine[black, dashed, ultra thick]{h}{h}
\DrawVLine[black, dashed, ultra thick]{h}{15}
\begin{ytableau}
\none & \none & \none & \none & \none & \emptytikzmark{13}{1} & \emptytikzmark{14}{1}  &  \emptytikzmark{15}{1} \\
\none & \none & \none & \none & \emptytikzmark{12}{1} & & & \emptytikzmark{h}{1}\\
\none & \none & *(mygray) \emptytikzmark{10}{1} & *(mygray)\emptytikzmark{11}{1} & & & \\
\none & \none & *(mygray)\emptytikzmark{9}{1} & *(mygray) & & & \emptytikzmark{g}{1} \\
\none 	  & *(mygray) & *(mygray)\emptytikzmark{8}{1}     & *(mygray) &  & \emptytikzmark{f}{1}  \\
\none 	  & *(mygray) & *(mygray)\emptytikzmark{7}{1}     & *(mygray) &  \\
*(mygray) & *(mygray) & *(mygray)\emptytikzmark{6}{1}     & *(mygray) & \emptytikzmark{e}{1} \\
*(mygray) & *(mygray)\emptytikzmark{5}{1} & *(mygray)\emptytikzmark{c}{1}  & *(mygray)\emptytikzmark{d}{1} \\
*(mygray) & *(mygray)\emptytikzmark{4}{1} & *(mygray)\emptytikzmark{b}{1}\\
*(mygray) & *(mygray)\emptytikzmark{3}{1} & *(mygray)\emptytikzmark{a}{1} \\
*(mygray) \emptytikzmark{1}{1} &  
\end{ytableau} \]
\DrawHLine[black, ultra thick]{1}{1}
\DrawVLine[black, ultra thick]{1}{1}
\DrawHLine[red, ultra thick]{3}{3}
\DrawHLine[black, ultra thick]{a}{a}
\DrawVLine[black, ultra thick]{a}{b}
\DrawHLine[black, ultra thick]{d}{d}
\DrawVLine[black, ultra thick]{d}{11}
\DrawVLineLeft[black, ultra thick]{12}{12}
\DrawHLineAbove[black, ultra thick]{12}{12}
\DrawVLineLeft[black, ultra thick]{13}{13}
\DrawHLineAbove[black, ultra thick]{13}{15}
\DrawVLine[black, dashed, ultra thick]{c}{10}
\DrawVLineLeft[black, dashed, ultra thick]{a}{10}
\DrawHLineAbove[black, dashed, ultra thick]{10}{11}
\DrawHLine[black, dashed, ultra thick]{e}{e}
\DrawVLine[black, dashed, ultra thick]{e}{12}
\DrawHLine[black, dashed, ultra thick]{f}{f}
\DrawVLine[black, dashed, ultra thick]{f}{13}
\DrawHLine[black, dashed, ultra thick]{g}{g}
\DrawVLine[black, dashed, ultra thick]{g}{14}
\DrawHLine[black, dashed, ultra thick]{h}{h}
\DrawVLine[black, dashed, ultra thick]{h}{15}
\caption{When the edge touches only the right boundary, a lattice path using this edge can now drop down from the top boundary at any column after column $i$. However, there is a unique path corresponding to the monomial $x_1^r x_2^{n-r+1}$ if $i \leq r$ and no possible paths if $i > r$.}\label{right_boundary}
\end{figure}

Thus we have identified three cases where there is at least one lattice path corresponding to $x_1^r x_2^{n-r+1}$: the interior horizontal edge lies in one of column $1,\ldots,r$ and touches the right boundary, the edge lies in column $r$ and touches neither boundary, or the edges lies in one of column $r,\ldots,n$ and touches the left boundary. We will consider the fourth case, where the interior edge is a bottleneck edge, in the next paragraph. Fix two adjacent rows, and consider the set of horizontal edges between these two rows. The columns that these edges lie in are either all to the left of column $r$, contain column $r$, or all to the right of column $r$. In any of these three cases there is exactly one valid edge, as depicted in Figure \ref{possible_edges}. That is, between any two adjacent rows there is exactly one edge that correponds to at least one lattice path.  Since there are $m$ rows, this gives $m-1$ possible valid interior horizontal edges. Unless the edges are bottleneck edges, each possible edge corresponds to a single lattice path. It remains to count the additional lattice paths given by bottleneck edges.

\begin{figure}[h]
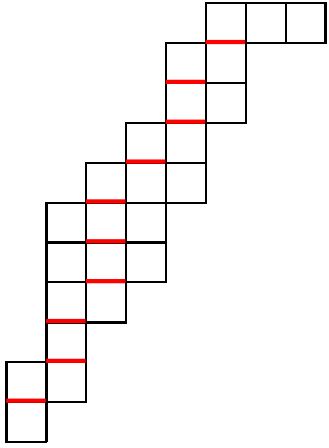

%\ytableausetup{notabloids}
\begin{ytableau}
\none & \none & \none & \none & \none   & \emptytikzmark{10}{1} & & \\
\none & \none & \none & \none & \emptytikzmark{9}{1} & \\
\none & \none & \none & \none & \emptytikzmark{8}{1} & \\
\none & \none & \none   & \emptytikzmark{7}{1} &   \\
\none & \none & \emptytikzmark{6}{1} &  &  \\
\none &  &  \emptytikzmark{5}{1} &  \\
\none &  & \emptytikzmark{4}{1} &  \\
\none & \emptytikzmark{3}{1} &  \\
\none & \emptytikzmark{2}{1}&\none \\
\emptytikzmark{1}{1} &  \\
 & \none
\end{ytableau}

\DrawHLine[red, ultra thick]{1}{1}
\DrawHLine[red, ultra thick]{2}{2}
\DrawHLine[red, ultra thick]{3}{3}
\DrawHLine[red, ultra thick]{4}{4}
\DrawHLine[red, ultra thick]{5}{5}
\DrawHLine[red, ultra thick]{6}{6}
\DrawHLine[red, ultra thick]{7}{7}
\DrawHLine[red, ultra thick]{8}{8}
\DrawHLine[red, ultra thick]{9}{9}
\DrawHLine[red, ultra thick]{10}{10}

\caption{There are $m-1$ possible edges that can be chosen as the interior horizontal edge for a lattice path. Unless the edge is a bottleneck edge, each such edge corresponds to a unique lattice path.} \label{possible_edges}
\label{12RPPex}
\end{figure}

Now suppose the interior horizontal edge is a bottleneck edge lying in column $i$. Then there is flexibility on both sides: there can be between 0 and $(i-1)$ 1-pure columns to the left of column $i$ and between $0$ and $(n-i)$ 2-pure columns to the right of column $i$. If there are $x$ 1-pure columns to the left of column $i$, $x$ may be between 0 and $\max(i-1,r-1)$. If $x$ 1-pure columns lie to the left, the remaining $(r-x-1)$ 1-pure columns can be chosen to be to the right of column $i$ (because we assumed that $r \leq n-r+1$). Hence there are $\max(i,r)$ possible lattice paths using a given bottleneck edge in column $i$. 

We can now give a formula for the coefficient of $x_1^r x_2^{n-r+1}$. Let $k = \lceil\frac{n}{2}\rceil$ and $f_i= b_i + b_{n-i+1}$ for $i=1, 2, \ldots, k-1$. If $n$ is even, let $f_k = b_k + b_{n-k+1}$ and if $n$ is odd, let $f_k = b_k$. There are always at least $m-1$ valid lattice paths. Each bottleneck edge in column $i$ also contributes an additional $\max(i,r)-1$ lattice paths. Hence the coefficient is 
\[
(m - 1) + f_2 + 2f_3+3f_4+ \ldots + (r-1)f_r + (r-1)f_{r+1}+ \ldots + (r-1)f_k.
\]
Let $t_r$ denote the coefficient of $x_1^{r}x_2^{n-r+1}$. Note that for $2\leq{r}\leq{k-1}$, we have $2t_{r}-t_{r-1}-t_{r+1}=f_r$. Since any two $g$-equivalent shapes $\lambda/\mu$ and $\gamma/\nu$ must have the same coefficients $t_r$, it follows that for $2\leq{r}\leq{k-1}$ the sums $f_r = b_r + b_{n-r+1}$ are the same for the two shapes. Also, since
\[
t_k = (m - 1) + f_2 + 2f_3+3f_4+ \ldots + (k-1)f_k 
\]
is invariant for the two shapes, it then follows that $f_k$ is invariant for the two shapes.

By Corollary 8.11 in \cite{rsw2009coincidences}, we also have that $b_1 + \cdots + b_n = f_1 + \cdots + f_k$ is invariant, since the total number of bottleneck edges is the number of 1s in the 2-row overlap composition. Hence $f_1$ is invariant as well.
\end{proof}

\begin{remark}
For a ribbon $[\alpha_1, \alpha_2, \ldots, \alpha_n]$, we have $b_i = \alpha_i - 1$ for $i=1, 2, \ldots , n$. Hence Theorem \ref{bottleneck_cond} generalizes Lemma \ref{ribbon-col-sum} as noted at the beginning of the section.
\end{remark}

\begin{ex}
\ytableausetup{boxsize=.35cm}
It is noted in \cite{rsw2009coincidences} that the shapes \[\ydiagram{4+2,2+3,1+4,1+2,2,2}\qquad \ydiagram{4+2,3+2,3+2,1+3,4,2}\] are Schur equivalent. But since $b_2 + b_5 = 2$ for the first shape and $b_2+b_5 = 1$ for the second shape, it follows that the two shapes are not $g$-equivalent.
\end{ex}

\begin{ex} 
Having the same bottleneck edge sequence is not sufficient for two skew shapes to be $g$-equivalent. By Theorem $7.6$ in \cite{rsw2009coincidences}, the shapes $D_1$ and $D_2$ below are equivalent and have the same bottleneck edge sequence. However, upon computation it is found that they are not $g$-equivalent. 
%\Becky{I don't think we've defined composition for a ribbon and a partition. Do we need to use that here or can we just give the shapes?}

\[
D_1 = \ydiagram{5+3,5+2,3+3,5,2}\qquad
D_2 = \ydiagram{6+2,5+3,4+2,1+5,3}
\]
\end{ex}

Since the bottleneck condition followed as a result of comparing terms of $g$ with degree $n+1$ in two variables, it is natural to compute coefficients for terms of higher degree or more variables. The following result shows that terms of degree $n+1$ and more than two variables do not impose additional constraints for two skew shapes to be $g$-equivalent.

\begin{prop}
Suppose two skew shapes $\lambda/\mu$ and $\gamma/\nu$ have the same number of rows and the polynomial $g_{\lambda/\mu}$ and $g_{\gamma/\nu}$ have same coefficient for every term of degree $n+1$ with two variables. Then in fact these polynomials have the same coefficient for any term of degree $n+1$.
\end{prop}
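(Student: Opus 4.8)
The plan is to pass to the degree-$(n+1)$ homogeneous component, reinterpret the hypothesis in the monomial symmetric function basis, and then prove that this component always lies in the span of the \emph{two-row} Schur functions, a space on which two-variable data determines everything.

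First I would record the structural fact underlying the whole argument: a reverse plane partition $P$ of $\lambda/\mu$ contributes to degree $n+1$ exactly when precisely one of the $n$ columns contains two distinct entries and the remaining $n-1$ columns are each monochromatic. Indeed, the degree of $x^P$ equals the sum over columns of the number of distinct entries in that column; each summand is at least $1$, so a total of $n+1$ over $n$ columns forces exactly one column to have two distinct values. Writing $F_{\lambda/\mu}$ for the degree-$(n+1)$ part of $g_{\lambda/\mu}$ (a symmetric function, since $g_{\lambda/\mu}$ is) and expanding $F_{\lambda/\mu}=\sum_{\rho\vdash n+1}c_\rho\,m_\rho$ with $c_\rho=[x_1^{\rho_1}x_2^{\rho_2}\cdots]\,F_{\lambda/\mu}$, the hypothesis says precisely that $c_\rho$ agrees for $\lambda/\mu$ and $\gamma/\nu$ whenever $\rho$ has at most two parts. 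Note also that $c_{(n+1)}=0$ automatically, as a single value cannot occupy $n+1$ distinct columns. The goal becomes: these at-most-two-part coefficients force \emph{all} coefficients $c_\rho$ to agree.

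The key lemma, which I expect to be the main obstacle, is that $F_{\lambda/\mu}$ lies in the span of the two-row Schur functions $s_{(n+1-q,q)}$ for $1\le q\le\lfloor (n+1)/2\rfloor$; equivalently, its Schur expansion has no term $s_\rho$ with $\ell(\rho)\ge 3$ (and no $s_{(n+1)}$, which follows from $c_{(n+1)}=0$). I would attempt this by first observing that $F_{\lambda/\mu}$ is Schur-positive, grouping the one-mixed-column reverse plane partitions according to the semistandard object recording, in each column, its distinct entries, in the spirit of Proposition \ref{ribbon_expand}, and then arguing that every shape that arises this way has at most two rows. A perhaps more robust alternative is to prove the monomial identity $c_\rho=\sum_q a_q\,K_{(n+1-q,q),\rho}$ directly, via a weight-preserving bijection between one-mixed-column reverse plane partitions of content $\rho$ and pairs consisting of a two-row semistandard tableau of content $\rho$ and an auxiliary datum counted independently of $\rho$. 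The difficulty, and the reason this is the crux, is that the monochromatic columns interact through the column-adjacency (row-overlap) structure of $\lambda/\mu$, so it is not a priori clear that the apparent three-or-more-row contributions cancel; establishing this cancellation is the heart of the argument. Direct checks on ribbons via Proposition \ref{ribbon_expand} and on small straight shapes confirm that they do cancel, leaving only two-row Schur functions.

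Granting the key lemma, I would finish by specializing $x_3=x_4=\cdots=0$. This kills every $s_\rho$ with $\ell(\rho)\ge 3$ and sends $s_{(n+1-q,q)}$ to a polynomial in $x_1,x_2$ whose top monomial is $x_1^{\,n+1-q}x_2^{\,q}$; as these top monomials are distinct across $q$, the images are linearly independent, so the specialization map is injective on the span supplied by the key lemma. Since the two-variable coefficients of $F_{\lambda/\mu}$ are exactly the coefficients of $F_{\lambda/\mu}(x_1,x_2,0,\dots)$, the hypothesis yields $F_{\lambda/\mu}(x_1,x_2,0,\dots)=F_{\gamma/\nu}(x_1,x_2,0,\dots)$, whence $F_{\lambda/\mu}=F_{\gamma/\nu}$ by injectivity, so all degree-$(n+1)$ coefficients agree. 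I would close by noting that the hypotheses are consistent with this framework: both shapes are taken to have $n$ columns, so that degree $n+1$ is one above the minimal degree and the one-mixed-column description is valid for each, while the number of rows is in fact recoverable from the two-variable data, since by the proof of Theorem \ref{bottleneck_cond} the coefficient of $x_1 x_2^{\,n}$ equals $m-1$.
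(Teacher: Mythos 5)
Your reduction at the end is fine: restriction to two variables is injective on the span of $\{s_{(n+1-q,q)}\}$, so \emph{if} the degree-$(n+1)$ component of $g_{\lambda/\mu}$ always lay in that span, the two-variable coefficients would determine it. But that span-containment is exactly the content of your ``key lemma,'' and you have not proved it --- you offer two candidate strategies (Schur-positivity plus a column-recording map in the spirit of Proposition \ref{ribbon_expand}, or a direct cancellation/bijection) together with checks on ribbons and small shapes, and you yourself flag the cancellation as ``not a priori clear.'' That is the entire difficulty of the proposition, so as written the argument has a genuine gap. Moreover, the lemma is \emph{false} in the generality the proposition requires: skew shapes need not be connected. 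Take $\lambda/\mu=\langle 2,2,1,1\rangle/\langle 1,1\rangle$, two disjoint columns of height $2$, so $n=2$ and the two columns are filled independently. The degree-$3$ component is $2m_{(2,1)}+6m_{(1,1,1)}=2s_{(2,1)}+2s_{(1,1,1)}$, which contains a three-row Schur function. (Your side remark that the coefficient of $x_1x_2^n$ equals $m-1$ fails for the same shape: that coefficient is $2$ while $m-1=3$.) So the approach cannot work as stated without at least restricting to connected shapes, and even there the key lemma --- which does appear to hold in examples such as the $3\times 3$ square and the ribbon $[1,3,1]$ --- would need an actual proof that engages with how overlapping columns constrain the single mixed column; the recording map of Proposition \ref{ribbon_expand} does not obviously produce a two-row object once columns can overlap in more than one row.

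For contrast, the paper takes a different and more hands-on route: by induction on the number $k$ of variables it shows that the coefficient of $x_1^{i_1}\cdots x_k^{i_k}$ in degree $n+1$ equals $(k-1)(m-1)+c_2b_2+\cdots+c_{n-1}b_{n-1}$, an affine function of the bottleneck numbers whose coefficient vector satisfies $c_i=c_{n-i+1}$ (by invariance under antipodal rotation applied to ribbons realizing arbitrary bottleneck sequences). Since the proof of Theorem \ref{bottleneck_cond} shows the two-variable data determines every sum $b_i+b_{n-i+1}$, and $m$ is fixed by hypothesis, every coefficient is determined. Note that this expression ``affine in the $b_i$ with palindromic coefficients'' is weaker than, and survives where, your ``two-row Schur span'' claim fails; if you want to salvage your route you would essentially have to prove something equivalent to the paper's claim anyway.
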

\begin{proof}
Fix positive integers $i_1,i_2,\ldots,i_k$ where $k\geq2$ is some positive integer, and let $n = (\sum_{j=1}^k i_j)-1$. Given a skew diagram $\lambda/\mu$ with $n$ columns, we claim that the coefficient of $x_1^{i_1}x_2^{i_2}\ldots x_k^{i_k}$ can be expressed as a $\mathbb{Z}$-linear combination $c_0 + c_2b_2 + \ldots c_{n-1}b_{n-1}$ of the bottleneck numbers $b_i$. Furthermore, the constant $c_0$ is known to be $(k-1)(m-1)$, where $m$ is the number of rows in $\lambda/\mu$. We proceed by induction on $k$. 

The base case $k=2$ is given in the proof of Theorem \ref{bottleneck_cond}, so we may assume $k \geq 3$. We count the number of reverse plane partitions giving the monomial $x_1^{i_1}\cdots x_k^{i_k}$. 

Suppose first that every column containing a $1$ is in fact $1$-pure. Then the first $i_1$ columns must be filled with $1$'s. Note the remaining squares form a skew shape with $n-i_1$ columns, as depicted in Figure \ref{induction}. We henceforth use $(\lambda/\mu)_{i_1}$ to denote the skew shape given by removing the first $i_1$ columns of $\lambda/\mu$. Note $(\lambda/\mu)_{i_1}$ must be filled with a reverse plane partition giving the monomial $x_2^{i_2}\cdots x_k^{i_k}$.
\ytableausetup{boxsize=.5cm}
\begin{figure}[h]
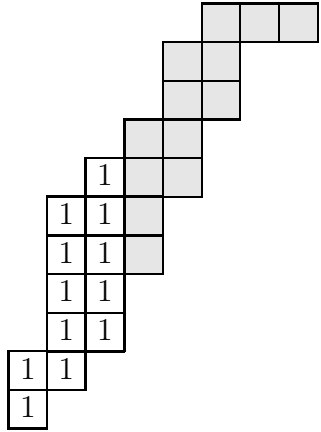

%\ytableausetup{notabloids}
\begin{ytableau}
\none & \none & \none & \none & \none &*(mygray) &*(mygray) &*(mygray) \\
\none & \none & \none & \none & *(mygray) & *(mygray)\\
\none & \none & \none & \none & *(mygray) & *(mygray)\\
\none & \none & \none & *(mygray) & *(mygray) \\
\none & \none & 1 & *(mygray) & *(mygray) \\
\none & 1 &     1 & *(mygray) \\
\none & 1 &     1 & *(mygray) \\
\none & 1 &     1 \\
\none & 1 &     1 \\
1     & 1 \\
1
\end{ytableau}
\caption{The remaining shape shaded in gray is a skew shape with $n-i_1$ columns, denoted $(\lambda/\mu)_{i_1}$.}\label{induction}
\end{figure}

Let $m'$ be the number of rows in the shape obtained by removing the first $i_1$ columns from $\lambda/\mu$. Then by induction the number of ways to fill in this shape is $$(k-2)(m'-1) + c'_{i_1+2} b_{i_1+2} + \cdots c'_{n-1} b_{n-1}$$ for some integers $c'_{i_1+2},\ldots,c'_{n-1}.$

The remaining case is when the reverse plane partition has a mixed column containing a 1. Given such a reverse plane partition, consider the 1,2-RPP obtained by replacing every entry greater than or equal to 2 with 2. From this 1,2-RPP we obtain a lattice path via our previously described bijection between 1,2-RPPs and lattice paths. Since the reverse plane partition has a mixed column containing a 1 and the total degree of the monomial $x_1^{i_1}x_2^{i_2}\ldots x_k^{i_k}$ is $n+1$, this lattice path must have a single interior horizontal edge. As noted in Figure \ref{12RPPex}, there are $m-1$ possibilities for the unique interior horizontal edge. 

Consider first the interior horizontal edges in columns $1,\ldots,i_1$ touching the bottom boundary of $\lambda/\mu$. This case is depicted in Figure \ref{case_1}. Note that there are $m-m'$ such edges, since in total there are $m-1$ edges touching the bottom boundary and exactly $m'-1$ of them lie in columns $i_1+1,\ldots,n$. For each of these $m-m'$ edges, one possible lattice path. The path starts at the top right, travels along the top boundary until it reaches the boundary between column $i_1$ and column $i_1+1$, drops down to the bottom boundary, and travels along the bottom boundary until the horizontal edge, traverses the edge, and then immediately drops back down to the bottom boundary and traverses it until reaching the bottom left. This lattice path determines which squares are filled with 1's. The remaining shape is a disconnected skew shape where one component is a single column and the other component is $(\lambda/\mu)_{i_1}$. There are $(k-1)$ fillings using this lattice path, since the column below the edge may be filled with any of $2,\ldots,k$ and the remaining columns must fill $(\lambda/\mu)_{i_1}$ in increasing order. Note that unless the edge is a bottleneck edge, this is the unique lattice path using this edge.

\begin{figure}[h]
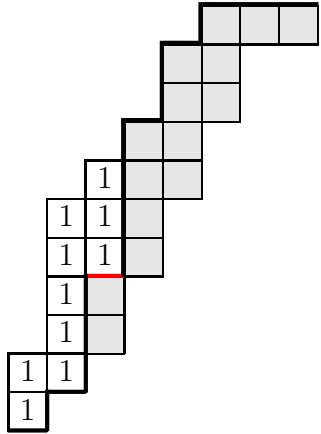

%\ytableausetup{notabloids}
\begin{ytableau}
\none & \none & \none & \none & \none &*(mygray)\emptytikzmark{10}{1} &*(mygray)\emptytikzmark{11}{1} &*(mygray)\emptytikzmark{12}{1} \\
\none & \none & \none & \none & *(mygray)\emptytikzmark{9}{1} & *(mygray)\\
\none & \none & \none & \none & *(mygray)\emptytikzmark{8}{1} & *(mygray)\\
\none & \none & \none & *(mygray)\emptytikzmark{7}{1} & *(mygray) \\
\none & \none & 1 & *(mygray) & *(mygray) \\
\none & 1 &     1 & *(mygray) \\
\none & 1 &     \tikzmark{5}{1} & *(mygray)\emptytikzmark{6}{1} \\
\none & \tikzmark{4}{1} &     *(mygray) \\
\none & \tikzmark{3}{1} &     *(mygray) \\
1     & \tikzmark{2}{1} \\
\tikzmark{1}{1}
\end{ytableau}
\caption{Case 1: the lattice path uses an edge in columns $1,\ldots,i_1$ touching the bottom boundary. Then the remaining shape is the union of $(\lambda/\mu)_{i_1}$ and a single column.}\label{case_1}
\DrawHLine[red, ultra thick]{5}{5}
\DrawHLine[black, ultra thick]{1}{1}
\DrawHLine[black, ultra thick]{2}{2}
\DrawVLine[black, ultra thick]{1}{1}
\DrawVLine[black, ultra thick]{2}{4}
\DrawHLineAbove[black, ultra thick]{7}{7}
\DrawVLineLeft[black, ultra thick]{6}{7}
\DrawVLineLeft[black, ultra thick]{8}{9}
\DrawHLineAbove[black, ultra thick]{9}{9}
\DrawVLineLeft[black, ultra thick]{10}{10}
\DrawHLineAbove[black, ultra thick]{10}{12}
\end{figure}

The remaining $m'-1$ edges are those in column $i_1$ not touching the bottom boundary and the edges in column $i_i+1,\ldots,n$ touching the top boundary. We similarly describe a possible lattice path for each of these edges. Suppose the edge lies in column $i$. The path starts at the top right, travels along the top boundary until the boundary between column $i$ and $i+1$, drops down to the edge and traverses it, traverses the top boundary until the boundary between column $i_1-1$ and column $i_1$, and drops down to the bottom boundary and traverses it until reaching the bottom left. This path determines which squares are filled with 1's. The remaining squares form a (possibly disconnected) skew shape, which must be filled with no mixed columns. Note the remaining skew shape is connected if and only if the horizontal edge was not a bottleneck edge. If the shape is connected, then filling the columns in increasing order is the only possible filling. Otherwise, this is one possible filling but there may be more.

Thus far, this gives us \begin{eqnarray*} & &(k-2)(m'-1) + c'_{i_1+2} b_{i_1+2} + \cdots c'_{n-1} b_{n-1} + (k-1)(m-m') + (m'-1) \\ &=& (k-1)(m-1) + c'_{i_1+2} b_{i_1+2} + \cdots c'_{n-1} b_{n-1}\end{eqnarray*} fillings. It remains to show each bottleneck edge in column $i$ contributes a fixed number of additional fillings depending only on $i$.
 \begin{figure}[h!]
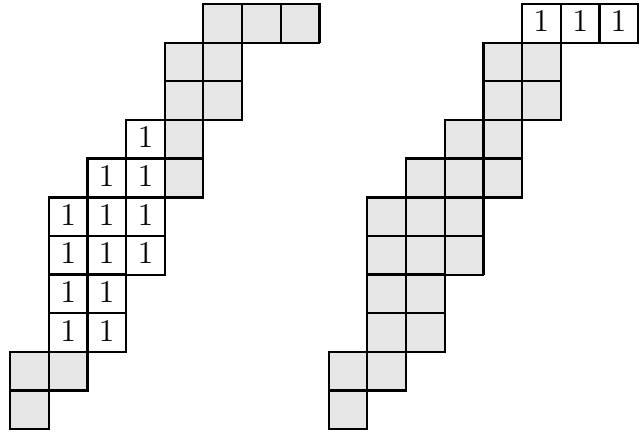

%\ytableausetup{notabloids}
\begin{ytableau}
\none & \none & \none & \none & \none &*(mygray) &*(mygray) &*(mygray) \\
\none & \none & \none & \none & *(mygray) & *(mygray) \\
\none & \none & \none & \none & *(mygray) & *(mygray) \\
\none & \none & \none & 1 &  *(mygray)\\
\none & \none & 1 & 1 & *(mygray) \\
\none & 1 &     1 & 1 \\
\none & 1 &     1 & 1 \\
\none & 1 &     1 \\
\none & 1 &     1 \\
 *(mygray)    & *(mygray) \\
*(mygray)     & \none
\end{ytableau}
\begin{ytableau}
\none & \none & \none & \none & \none & 1 & 1 & 1 \\
\none & \none & \none & \none & *(mygray) & *(mygray) \\
\none & \none & \none & \none & *(mygray) & *(mygray) \\
\none & \none & \none & *(mygray) &  *(mygray)\\
\none & \none & *(mygray) & *(mygray) & *(mygray) \\
\none & *(mygray) & *(mygray) & *(mygray) \\
\none & *(mygray) & *(mygray) & *(mygray) \\
\none & *(mygray) & *(mygray) \\
\none & *(mygray) & *(mygray) \\
 *(mygray)    & *(mygray) \\
*(mygray)     & \none
\end{ytableau}
\caption{The remaining shape will have 1 or 2 components. The number of fillings is determined by $i_2,\ldots,i_k$ and the number of columns in the components.}\label{cases}
\end{figure}

As noted in the proof of Theorem \ref{bottleneck_cond}, each bottleneck edge in column $i$ has $\min(i,n-i+1,i_1)$ possible lattice paths using that edge. Each lattice path determines which squares will be filled with 1's. Note the remaining squares will form a possibly disconnected skew shape with $n-i_1+1$ columns (depicted in Figure \ref{cases}), which must then be filled with no mixed columns. There are a fixed number of ways to fill this shape, which depends only on $i_2,\ldots,i_k$ and the number of columns in the two components. The possible number of columns in each component is in turn determined by which column the bottleneck edge is in. This finishes the proof of the claim.

 \begin{figure}[h!]
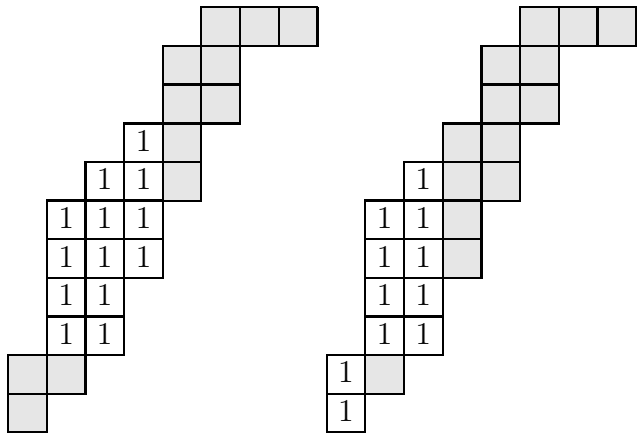

%\ytableausetup{notabloids}
\begin{ytableau}
\none & \none & \none & \none & \none &*(mygray) &*(mygray) &*(mygray) \\
\none & \none & \none & \none & *(mygray) & *(mygray) \\
\none & \none & \none & \none & *(mygray) & *(mygray) \\
\none & \none & \none & 1 &  *(mygray)\\
\none & \none & 1 & 1 & *(mygray) \\
\none & 1 &     1 & 1 \\
\none & 1 &     1 & 1 \\
\none & 1 &     1 \\
\none & 1 &     1 \\
 *(mygray)    & *(mygray) \\
*(mygray)     & \none
\end{ytableau}
\begin{ytableau}
\none & \none & \none & \none & \none &*(mygray) &*(mygray) &*(mygray) \\
\none & \none & \none & \none & *(mygray) & *(mygray) \\
\none & \none & \none & \none & *(mygray) & *(mygray) \\
\none & \none & \none & *(mygray) &  *(mygray)\\
\none & \none & 1 & *(mygray) & *(mygray) \\
\none & 1 &     1 & *(mygray) \\
\none & 1 &     1 & *(mygray) \\
\none & 1 &     1 \\
\none & 1 &     1 \\
1    & *(mygray) \\
1     & \none
\end{ytableau}
\caption{The possible numbers of columns in the components of the remaining shape are determined by which column the bottleneck edge is in. In this example, since the bottleneck edge is in column 2, the components have 1 and $n-i_1+1$ columns or 2 and $n-i_1+2$ columns.}
\end{figure}
Thus we have that the coefficient of $x_1^{i_1}\cdots x_k^{i_k}$ for any shape with $n = (\sum_{j_1}^k i_j) - 1$ columns is $(k-1)(m-1) + c_2 b_1 + \cdots + c_{n-1} b_{n-1}$ for some integers $c_2,\ldots,c_{n-1}$. Recall that every shape is equivalent to its 180-degree rotation, and note 180-degree rotation reverses the bottleneck sequence $b_1,\ldots,b_n$. Since there are shapes with arbitrary sequences of $b_1,\ldots,b_n$ (for example, the ribbon $[b_1+1,\ldots,b_n+1]$), it follows that $c_i = c_{n-i+1}$ for $i=2,\ldots,n-1$. Recall also that the proof of Theorem \ref{bottleneck_cond} shows each sum $b_i + b_{n-i+1}$ for $i=2,\ldots,n-1$ must be the same for any two shapes such that the terms in $g$ of degree $n+1$ with two variables are the same. Since the number of rows $m$ must be the same as well, it follows that the sum $(k-1)(m-1) + c_2 b_1 + \cdots + c_{n-1} b_{n-1}$ must also be the same.
\end{proof}

\begin{prop} \label{x1_2 x2_n}
The coefficient of $x_1^2 x_2^n$ in $g_{\lambda/\mu}$ is 
\[
\binom{m}{2} - \sum_{i=1}^n \binom{b_i+1}{2} .
\]
\end{prop}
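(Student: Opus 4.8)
The plan is to interpret the coefficient combinatorially and then set up a sign-free count that produces $\binom{m}{2}$ as a baseline with the bottleneck terms as corrections, reusing the $1,2$-RPP/lattice-path dictionary from the proof of Theorem \ref{bottleneck_cond}. Since the monomial of a reverse plane partition records, for each value, the number of columns containing it, the coefficient of $x_1^2 x_2^n$ counts reverse plane partitions using only the entries $1$ and $2$ in which all $n$ columns contain a $2$ and exactly two columns contain a $1$. Equivalently, the cells filled with $1$ form a region $R$ closed under moving up and to the left inside $\lambda/\mu$, occupying exactly two columns, with no column of $\lambda/\mu$ entirely contained in $R$ (so that every column keeps a $2$ at its bottom). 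In lattice-path language this says the path has exactly two interior horizontal edges (the two mixed columns) and no column lies entirely above the path.

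Next I would build a bijection between such regions $R$ and pairs of rows. To $R$ I associate $(i_1,i_2)$ with $i_1<i_2$, where $i_1$ is the lowest row carrying a $1$ in the right of the two $1$-columns and $i_2-1$ is the lowest row carrying a $1$ in the left $1$-column; because $R$ is closed to the left, the left column always reaches at least as low, so indeed $i_1<i_2$. Conversely a pair $(i_1,i_2)$ should reconstruct a unique candidate region: the two $1$-columns are forced by the left boundary of $\lambda/\mu$, the right column is filled with $1$'s down to row $i_1$, and the left column down to row $i_2-1$. When $\lambda/\mu$ has no bottleneck edges, every pair $1\le i_1<i_2\le m$ yields a legal region, giving the baseline count $\binom{m}{2}$; this step mirrors the ``one valid edge per adjacent pair of rows'' phenomenon already used in the proof of Theorem \ref{bottleneck_cond}.

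Finally I would account for the correction. A bottleneck edge is a pinch of the shape, and the structural observation is that all bottleneck edges in a fixed column $i$ form a single run of consecutive edges spanning $b_i+1$ rows, whose interior rows are single cells in column $i$. A pair $(i_1,i_2)$ fails to produce a legal region precisely when both $i_1$ and $i_2$ lie in such a pinch block, for then the reconstructed region is forced to fill some column completely and destroy the required $2$. Thus column $i$ obstructs exactly the $\binom{b_i+1}{2}$ pairs lying inside its block, and the single-cell structure of the interior pinch rows shows that distinct columns obstruct disjoint families of pairs. Subtracting then yields $\binom{m}{2}-\sum_{i=1}^{n}\binom{b_i+1}{2}$.

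The hard part will be making the region-to-pair correspondence and its failure set precise for a general jagged skew shape: correctly identifying the two forced $1$-columns from the left boundary, reconstructing $R$ uniquely, and---most delicately---proving that a run of $b_i$ bottleneck edges obstructs exactly the $\binom{b_i+1}{2}$ pairs inside its $(b_i+1)$-row block while distinct blocks share no obstructed pair. The key leverage is that bottleneck edges force the intervening rows to be single cells, which isolates the pinch blocks and lets one localize both the baseline bijection and the obstruction analysis.
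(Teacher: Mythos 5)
Your proposal is correct and follows essentially the same route as the paper: the paper also reduces the coefficient to counting $1,2$-RPPs with two mixed columns, parametrizes them by the (multiset of) heights of the two interior horizontal edges of the associated lattice path---which is your pair $(i_1,i_2)$ up to the trivial re-indexing $i_2 = h+1$---observes that each pair determines at most one path, and identifies the failures as exactly the $\binom{b_i+1}{2}$ pairs of bottleneck heights within a single column. Your explicit ``pinch block'' description of the run of $b_i$ consecutive bottleneck edges is a slightly more detailed account of a point the paper leaves implicit, but the argument is the same.
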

\begin{proof}
A 1,2-RPP giving the monomial $x_1^2 x_2^n$ must have no 1-pure columns, $(n-2)$ 2-pure columns, and two mixed columns. Hence the corresponding lattice paths have two interior horizontal edges. Consider the \textit{heights} of the interior horizontal edges. By an interior horizontal edge at height $i$ we mean the edge lies between row $i$ and row $i+1$. Observe that given the height of the two interior horizontal edges, there is at most one lattice path using the heights; since there are no 1-pure columns, the lattice path is completely determined by the heights chosen. 
\ytableausetup{boxsize=.5cm}
\begin{figure}[h]
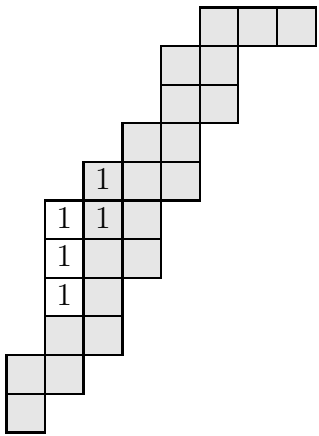

%\ytableausetup{notabloids}
\begin{ytableau}
\none & \none & \none & \none & \none &*(mygray) &*(mygray) &*(mygray) \\
\none & \none & \none & \none & *(mygray) & *(mygray)\\
\none & \none & \none & \none & *(mygray) & *(mygray) \\
\none & \none & \none & *(mygray) & *(mygray) \\
\none & \none & *(mygray) 1 & *(mygray) & *(mygray) \\
\none & 1 &     *(mygray) 1 & *(mygray) \\
\none & 1 & *(mygray)     & *(mygray) \\
\none & 1 & *(mygray)     \\
\none & *(mygray) & *(mygray)     \\
*(mygray)     & *(mygray) \\
*(mygray)  
\end{ytableau}
\caption{The heights determine the filling, since the column containing 1's more to the left must touch the left boundary of the shape, and the other column must touch the left boundary of the remaining shape.}
\end{figure}

There are $\binom{m}{2}$ ways to choose a pair of heights from $1,\ldots,m-1$ (with possible repetition). Since each pair of heights contributes either 1 or 0 lattice paths, the desired coefficient is thus $\binom{m}{2}$ minus the number of pairs not giving a lattice path. These are exactly the pairs of heights where the only interior horizontal edges at those heights lie in a single column. This is precisely the pairs of bottleneck edges from the same column. For each column $i$ there are $\binom{b_i+1}{2}$ ways to choose two of the bottlenecks in column $i$ (with possible repetition), giving the desired formula.
\end{proof}

By Corollary 8.11 in \cite{rsw2009coincidences}, the number of rows $m$ and the sum $b_1+\cdots+b_n$ are invariant under $g$-equivalence. Hence we attain the following as a direct consequence of Proposition~\ref{x1_2 x2_n}.
\begin{cor}\label{cor:trianglesums}
Suppose $g_{\lambda/\mu} = g_{\gamma/\nu}$. Then
\[
\sum_{i=1}^n (b_i^{\lambda/\mu})^2 = \sum_{i=1}^n (b_i^{\gamma/\nu})^2.
\]
Equivalently, the sums of the areas of the equilateral triangles of 1's in the row overlap compositions $r^{(2)},\ldots,r^{(m)}$ are the same.
\end{cor}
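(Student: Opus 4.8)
The plan is to read the corollary directly off Proposition~\ref{x1_2 x2_n}. Since $g_{\lambda/\mu}=g_{\gamma/\nu}$, the coefficient of $x_1^2 x_2^n$ must agree in the two polynomials. Writing $m$ and $m'$ for the numbers of rows of $\lambda/\mu$ and $\gamma/\nu$, Proposition~\ref{x1_2 x2_n} gives
\[
\binom{m}{2}-\sum_{i=1}^n \binom{b_i^{\lambda/\mu}+1}{2} = \binom{m'}{2}-\sum_{i=1}^n \binom{b_i^{\gamma/\nu}+1}{2}.
\]
By Corollary~8.11 of \cite{rsw2009coincidences}, $g$-equivalent shapes have the same number of rows, so $m=m'$ and the terms $\binom{m}{2}$, $\binom{m'}{2}$ cancel, leaving $\sum_i \binom{b_i^{\lambda/\mu}+1}{2}=\sum_i\binom{b_i^{\gamma/\nu}+1}{2}$.

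Next I would expand $\binom{b+1}{2}=\tfrac12(b^2+b)$, so that the previous identity becomes
\[
\sum_{i=1}^n (b_i^{\lambda/\mu})^2 + \sum_{i=1}^n b_i^{\lambda/\mu} = \sum_{i=1}^n (b_i^{\gamma/\nu})^2 + \sum_{i=1}^n b_i^{\gamma/\nu}.
\]
The total number of bottleneck edges $\sum_i b_i$ equals the number of $1$'s in the $2$-row overlap composition, which is again invariant under $g$-equivalence by Corollary~8.11 of \cite{rsw2009coincidences}. Subtracting the (equal) linear sums from both sides isolates the quadratic term and yields $\sum_i (b_i^{\lambda/\mu})^2=\sum_i(b_i^{\gamma/\nu})^2$, as desired.

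For the ``equivalently'' reformulation, I would recall from the discussion following Example~\ref{ex:rowoverlap} that a column with $b_i$ bottleneck edges corresponds to an equilateral triangle of $1$'s of side length $b_i$ in the array of row overlap compositions $r^{(2)},\ldots,r^{(m)}$; the area of this triangle is proportional to $b_i^2$, so the invariance of $\sum_i b_i^2$ is exactly the invariance of the total triangle area.

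There is no substantive obstacle here---the result is genuinely a corollary of Proposition~\ref{x1_2 x2_n}. The only points requiring care are (i) invoking \emph{both} invariants from \cite{rsw2009coincidences}: the row count $m$ is needed to cancel $\binom{m}{2}$, and the total bottleneck count $\sum_i b_i$ is needed to strip off the linear term, so neither alone suffices; and (ii) recognizing that the equality of $\sum_i\binom{b_i+1}{2}$ by itself does not immediately give $\sum_i b_i^2$ until the linear cancellation is performed.
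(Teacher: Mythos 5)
Your proposal is correct and follows essentially the same route as the paper, which likewise deduces the corollary directly from Proposition~\ref{x1_2 x2_n} together with the invariance under $g$-equivalence of the row count $m$ and of the total bottleneck count $b_1+\cdots+b_n$ (both cited from Corollary~8.11 of \cite{rsw2009coincidences}). You merely spell out the binomial expansion and cancellation that the paper leaves implicit in the phrase ``direct consequence.''
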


\begin{remark}
One can also count various other coefficients in the dual stable Grothendieck polynomial. For terms of degree greater than $n+1$, it is useful to define a generalization of bottleneck edges. To that end, for $i = 1,\ldots,\lambda_1-w+1$ the \textit{number of width $w$ bottlenecks in position i} is
\[
b^{(w)}_i = |\{ 1 \leq j \leq m-1 \mid \mu_j = i-1, \lambda_{j+1} = i+w-1 \}|.
\] 
For example, let $\lambda / \mu = (5,5,4,2,2,2)/(4,2,1,1,1,0)$. Then the number of bottleneck edges of each width is given below. Note $b^{(1)}$ is just the previously defined bottleneck edges.
\[\begin{tabular}{>{$}l<{$\hspace{12pt}}*{13}{c}}
b^{(5)} &&&&&&&0&&&&&&\\
b^{(4)} &&&&&&0&&0&&&&&\\
b^{(3)} &&&&&0&&0&&0&&&&\\
b^{(2)} &&&&0&&0&&1&&0&&&\\
b^{(1)} &&&0&&3&&0&&0&&1&&\\
\end{tabular}\]

We state the following propositions with proofs omitted for brevity.

\begin{prop}\label{x1_3 x2_n-1}
The coefficient of $x_1^3 x_2^{n-1}$ in $g_{\lambda/\mu}$ is 
\[
\left( \binom{m}{2} - \sum_{i=1}^{n} \binom{b^{(1)}_i+1}{2} \right) + \sum_{i=2}^{n-2} \binom{b_i^{(2)}+1}{2} + (m-2)\sum_{i=2}^{n-1} b_i^{(1)}\]\[ - 
 \left(b_2^{(1)}(m-\mu_1'-1) + b_{n-1}^{(1)}(\lambda_n'-1) + \sum_{i=2}^{n-2} b_i^{(1)}b_{i+1}^{(1)} \right).
\]
\end{prop}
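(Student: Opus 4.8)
The plan is to compute this coefficient directly by counting reverse plane partitions of shape $\lambda/\mu$ with weight $x_1^3 x_2^{n-1}$, exactly as in the proofs of Theorem~\ref{bottleneck_cond} and Proposition~\ref{x1_2 x2_n}. Such an RPP uses only the entries $1$ and $2$, so it is a $1,2$-RPP. Reading off the column data, the monomial $x_1^3 x_2^{n-1}$ forces exactly three columns to contain a $1$ and exactly $n-1$ columns to contain a $2$; since there are $n$ columns in total, exactly two columns are mixed, exactly one column is $1$-pure, and the remaining $n-3$ columns are $2$-pure. Passing to the lattice-path bijection, I would therefore count lattice paths inside $\lambda/\mu$ having exactly two interior horizontal edges (the two mixed columns) together with exactly one column lying entirely above the path (the $1$-pure column). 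This is the same two-interior-edge regime as Proposition~\ref{x1_2 x2_n}, but with one additional full column of $1$'s, which is precisely what accounts for the extra terms beyond $C := \binom{m}{2} - \sum_{i=1}^n \binom{b_i^{(1)}+1}{2}$, the coefficient of $x_1^2 x_2^n$.

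Next I would organize the count by the heights of the two interior horizontal edges, as in Proposition~\ref{x1_2 x2_n}, and by the position of the $1$-pure column. Because the $1$'s are left-justified within each row, the $1$-pure column is constrained to lie to the left of (or in rows disjoint from) the two mixed columns, so the extra full column of $1$'s is realized by letting the path dip to the bottom of one additional column. For a generic choice of the two edge-heights the number of ways to insert this extra $1$-pure column reproduces the base count $C$, while the two additive correction sums arise from bottleneck-forced edges: a pair of width-$2$ bottleneck edges spanning columns $i,i+1$ produces locked configurations contributing $\sum_{i=2}^{n-2}\binom{b_i^{(2)}+1}{2}$, and a width-$1$ bottleneck in column $i$ forces one interior edge to that column while leaving $m-2$ free choices for the remaining edge-height, contributing $(m-2)\sum_{i=2}^{n-1} b_i^{(1)}$.

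I would then apply inclusion–exclusion to discard the configurations that do not assemble into a legal lattice path with exactly one $1$-pure column; these overcounts are exactly the subtracted term. The adjacent-bottleneck interactions, where columns $i$ and $i+1$ are both width-$1$ bottleneck columns and the two forced edges collide, give $\sum_{i=2}^{n-2} b_i^{(1)} b_{i+1}^{(1)}$, while the two end corrections $b_2^{(1)}(m-\mu_1'-1)$ and $b_{n-1}^{(1)}(\lambda_n'-1)$ come from the special geometry at the two extreme columns: near column $2$ the inserted $1$-pure column must fit against the first column, whose height is $\mu_1'$, and near column $n-1$ against the last column, whose height is $\lambda_n'$. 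Combining the base count $C$ with the additive corrections and subtracting these collision and boundary terms yields the stated formula.

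The hard part will be the bookkeeping in the last two steps: pinning down, for each placement of the two interior edges, exactly how many placements of the $1$-pure column produce a valid path, and doing so uniformly across the interior of the shape while correctly handling the two ends. In particular, the irregular boundary terms involving $\mu_1'$ and $\lambda_n'$ (rather than $m$) show that the naive interior count fails in columns $2$ and $n-1$, so the main obstacle is isolating these boundary cases and verifying that no configuration is counted twice --- precisely the step that forces the asymmetric subtracted corrections and, by symmetry under antipodal rotation, explains why the column-$2$ and column-$(n-1)$ terms appear in dual form.
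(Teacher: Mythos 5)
Your overall strategy coincides with the paper's: pass to the lattice-path model, note that $x_1^3x_2^{n-1}$ forces exactly two interior horizontal edges plus one $1$-pure column, and organize the count by the multiset of heights of the two interior edges, starting from the $\binom{m}{2}$ height pairs and correcting for pairs that admit zero paths or more than one path. However, two assertions in your sketch would derail the bookkeeping if carried out literally. First, the claim that the $1$-pure column "is constrained to lie to the left of the two mixed columns" is false, and the computation hinges on its falsity: the positive corrections arise precisely because, when a chosen height is a width-$1$ bottleneck in columns $2,\dots,n-1$ (or both heights are width-$2$ bottlenecks at the same position), the single $1$-pure column can be placed on either side of the forced edge, giving $2$ paths instead of $1$; for two non-adjacent bottlenecks it can go left, between, or right ($3$ paths, i.e.\ $2$ extra, consistent with being double-counted in $(m-2)\sum b_i^{(1)}$), and the subtracted $\sum_{i=2}^{n-2}b_i^{(1)}b_{i+1}^{(1)}$ corrects for adjacent bottleneck columns where the "between" placement is impossible. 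Second, your reading of the end terms is off: $m-\mu_1'-1$ and $\lambda_n'-1$ are the numbers of interior horizontal edges lying in column $1$ and column $n$ respectively (column $1$ of the skew shape has $m-\mu_1'$ boxes, not $\mu_1'$), and $b_2^{(1)}(m-\mu_1'-1)$, $b_{n-1}^{(1)}(\lambda_n'-1)$ subtract the height pairs (bottleneck in column $2$, edge in column $1$) and (bottleneck in column $n-1$, edge in column $n$), for which the $1$-pure column has no room on the outer side and only one path exists rather than the two credited by $(m-2)\sum b_i^{(1)}$.

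Beyond these, the substantive content of the proof --- the case analysis verifying that a generic height pair admits exactly one path and that multiplicities greater than one arise only from width-$1$ and width-$2$ bottlenecks --- is exactly the part you defer to "bookkeeping," and it occupies essentially the entire argument in the paper. So the proposal is a correct identification of the paper's approach, but as written it both omits the decisive step and misstates the two structural facts (placement freedom of the $1$-pure column; meaning of $\mu_1'$ and $\lambda_n'$) from which the non-obvious terms of the formula actually come.
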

\begin{prop}
The coefficient of $x_1^3 x_2^n$ in $g_{\lambda/\mu}$ is 
\[
\binom{m+1}{3} - \sum_{i=1}^{n} \left((m-1)\binom{b_i^{(1)}+1}{2} - 2 \binom{b_i^{(1)}}{3} - b_i^{(1)}(b_i^{(1)}-1) \right) \]\[ - \sum_{i=1}^{n-1} \left(\binom{b_i^{(2)}+2}{3} + (b_i^{(1)}+b_{i+1}^{(1)})\binom{b_i^{(2)}+1}{2} + b_i^{(1)}b_i^{(2)}b_{i+1}^{(1)}\right).
\]
\end{prop}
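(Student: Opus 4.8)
The plan is to compute this coefficient by the lattice-path method used for Proposition~\ref{x1_2 x2_n}, pushed one degree further. First I would note that a $1,2$-RPP contributing to $x_1^3 x_2^n$ must have all $n$ columns containing a $2$, exactly three columns containing a $1$, hence no $1$-pure columns, exactly three mixed columns, and $n-3$ columns that are $2$-pure. Under the bijection with lattice paths these are exactly the paths with precisely three interior horizontal edges, and since a path is monotone (only leftward and downward steps), the three edges automatically lie in three distinct columns.

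The key structural input is a uniqueness lemma generalizing the observation in the proof of Proposition~\ref{x1_2 x2_n}: when there are no $1$-pure columns, a lattice path is determined by the multiset of heights of its interior horizontal edges. The point is that the leftmost interior edge must be seated so that every column to its left can be filled with $2$'s, and monotonicity then forces it into the unique leftmost admissible column; the remaining edges are pinned in the same greedy way. Granting this, the coefficient equals the number of \emph{realizable} multisets of three heights drawn from $\{1,\ldots,m-1\}$, of which there are $\binom{m+1}{3}$ in all. It then remains to subtract those height-multisets whose canonical placement fails, i.e.\ those for which the three edges cannot be seated in three distinct columns.

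Next I would classify the non-realizable multisets as a Hall-type obstruction and show they split into two disjoint families. Family (A): two of the three chosen heights are width-$1$ bottleneck heights of the \emph{same} column $i$, so both are forced into column $i$. Since a height is a bottleneck of at most one column, the events ``at least two edges are bottlenecks of column $i$'' are pairwise disjoint across $i$, so no cross-column inclusion--exclusion is needed; the direct multiset count ``at least two of three heights lie in a set of size $b_i^{(1)}$'' is $\binom{b_i^{(1)}+1}{2}\bigl(m-1-b_i^{(1)}\bigr) + \binom{b_i^{(1)}+2}{3}$, which simplifies to $(m-1)\binom{b_i^{(1)}+1}{2} - 2\binom{b_i^{(1)}}{3} - b_i^{(1)}(b_i^{(1)}-1)$, the summand of the first sum. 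Family (B): the three edges are confined to a pair of consecutive columns $\{i,i+1\}$ without already being of type (A); enumerating by how many edges are width-$2$ bottlenecks at position $i$ versus width-$1$ bottlenecks at columns $i$ and $i+1$ produces exactly $\binom{b_i^{(2)}+2}{3}$ (three width-$2$ edges), $(b_i^{(1)}+b_{i+1}^{(1)})\binom{b_i^{(2)}+1}{2}$ (two width-$2$ and one width-$1$), and $b_i^{(1)}b_i^{(2)}b_{i+1}^{(1)}$ (one of each), since three edges can never occupy only two columns. No width-$\geq 3$ terms appear, because three edges always fit into three columns.

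Finally I would verify that (A) and (B) are disjoint and exhaustive: a Hall obstruction on three edges can arise only from two edges sharing a single column or three edges sharing two consecutive columns, and (A) uses two bottlenecks of one column whereas (B) uses at most one width-$1$ bottleneck per column together with genuinely width-$2$ heights. Assembling the subtractions then yields the stated formula. The main obstacle is precisely this pair of technical facts: the uniqueness lemma (one must check that the greedy leftmost placement is forced by the no-$1$-pure-column condition, including the boundary behaviour at the extreme left and right), together with the multiplicity-correct characterization showing that the interval/Hall structure of the admissible-column sets makes (A) and (B) the complete, non-overlapping list of obstructions.
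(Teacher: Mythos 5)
Your proposal is correct and follows essentially the same route as the paper's own argument: reduce to counting multisets of three interior-edge heights out of $\binom{m+1}{3}$, use the fact that with no $1$-pure columns the (greedy) placement of the edges is forced, and subtract exactly the two families of unrealizable height-multisets — two width-$1$ bottlenecks sharing a column, and three edges confined to a consecutive pair of columns via width-$2$ bottlenecks. The only cosmetic difference is that you count the first family directly as $\binom{b_i^{(1)}+1}{2}\bigl(m-1-b_i^{(1)}\bigr)+\binom{b_i^{(1)}+2}{3}$ while the paper uses an overcount-and-correct argument; the two expressions agree with the stated summand.
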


\end{remark}

\section{Relation Between $g$-equivalence and $G$-equivalence}

It is natural to ask whether $g_A = g_B$ for two skew shapes $A$ and $B$ implies $G_A = G_B$, and vice versa. The following examples show that in general, neither equality implies the other.

\begin{ex}
\ytableausetup{boxsize=.35cm}
Based on computer computation, the shapes \[\ydiagram{4+4,1+5,4,2}\qquad \ydiagram{3+5,2+4,4,2}\] are $g$-equivalent but not $G$-equivalent. For example, the coefficients of $x_1^6 x_2^6 x_3^3 x_4$ in $G$ are $-353$ and $-354$, respectively.
\end{ex}

\begin{ex}\label{ribbon_staircase}
The shapes \[\ydiagram{3+5,3+3,1+3,2} \qquad \ydiagram{5+3,1+5,1+3,2}\] are $G$-equivalent but not $g$-equivalent. One can show $G$-equivalence through computer computation using the reverse lattice word expansion of $G_{\lambda/\mu}$ into stable Grothendieck polynomials indexed by straight shapes found in \cite{buch2002lrrule}. To see the shapes are not $g$-equivalent, we notice that $b_4+b_5 = 1$ for the shape on the left and $b_4+b_5=0$ for the shape on the right.
%\Becky{Do we actually \textit{know} they're $G$-equivalent, or is it just a guess?}
%\Rohil{I believe this was verified by computer program. Albert?}
%\Albert{Yes, with the reverse lattice word stuff}
%\Becky{Is this explanation of how you know they are $G$-equivalent correct? I'm still unclear whether we know they are $G$-equivalent or just have computer evidence that they are.}
%\Albert{Yes, the explanation is right. The reverse lattice word computation proves they are the same (it expands a skew G_\lambda/\mu using the basis {G_\lambda})}
\end{ex}

\section{Future Explorations}
\subsection{Coincidences of ribbon stable Grothendieck polynomials}

The combinatorics of ribbon stable Grothendieck polynomials seem to be more difficult than their dual stable Grothendieck and Schur counterparts. However, we still conjecture that coincidences among ribbon Grothendieck polynomials arise in precisely the same way as the dual case. 

\begin{conj}
Let $\alpha$ and $\beta$ be ribbons. Then $G_\alpha = G_\beta$ if and only if $\beta=\alpha$ or $\beta=\alpha^*$. 
\end{conj}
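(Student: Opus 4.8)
The plan is to transplant the proof of Theorem~\ref{thm:littlegribbon} to the stable Grothendieck setting, reusing its inductive skeleton essentially unchanged. The forward implication is free: $G_\alpha = G_{\alpha^*}$ by the antipodal invariance established in Section~2, so $\alpha=\beta$ or $\alpha=\beta^*$ immediately yields $G_\alpha=G_\beta$. For the converse, recall that $s_\alpha$ is the lowest-degree homogeneous component of $G_\alpha$; hence $G_\alpha=G_\beta$ forces $s_\alpha=s_\beta$, and Theorem~\ref{irred_fact} then supplies matched irreducible factorizations $\alpha=\alpha_k\circ\cdots\circ\alpha_1$ and $\beta=\beta_k\circ\cdots\circ\beta_1$ with each $\beta_i\in\{\alpha_i,\alpha_i^*\}$. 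The induction run in the proof of Theorem~\ref{thm:littlegribbon} uses nothing beyond Theorem~\ref{irred_fact} and Lemma~\ref{factor_sym}, so it will carry over once the two $G$-analogues below are in place.

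First I would isolate the single genuinely new ingredient. The proof of Lemma~\ref{factor_sym} is pure combinatorics of compositions whose only external input is the column-sum equality of Lemma~\ref{ribbon-col-sum}; consequently, once one proves the $G$-analogue
\[
G_\alpha = G_\beta \;\Longrightarrow\; \alpha_i+\alpha_{n-i+1} = \beta_i+\beta_{n-i+1} \quad (1\le i\le n)
\]
for ribbons $\alpha=[\alpha_1,\dots,\alpha_n]$ and $\beta=[\beta_1,\dots,\beta_n]$, the factor-symmetry lemma transfers verbatim and the theorem follows. Thus the entire problem reduces to establishing this column-sum invariant for $G$.

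To attack the column-sum invariant, I would extract the degree-$(N+1)$ homogeneous component of $G_\alpha$, where $N=\lvert\alpha\rvert$: by the sign convention it equals $-\sum_T x^T$ summed over set-valued tableaux of shape $\alpha$ with exactly one two-element box. Expanding this component in the basis of ribbon Schur functions and comparing coefficients of the two-row shapes $s_{(i,\,n-i+1)}$ --- whose linear independence is exactly what drives Lemma~\ref{ribbon-col-sum} --- one expects the coefficients to encode precisely the sums $\alpha_i+\alpha_{n-i+1}$, with the antipodal symmetry $G_\alpha=G_{\alpha^*}$ forcing the symmetric pairing. An alternative route is to expand $G_\alpha=\sum_\nu a_{\alpha\nu}G_\nu$ in the straight-shape basis via Buch's reverse-lattice-word set-valued Littlewood--Richardson rule \cite{buch2002lrrule}: since the $G_\nu$ are linearly independent, $G_\alpha=G_\beta$ is equivalent to $a_{\alpha\nu}=a_{\beta\nu}$ for all $\nu$, the layer $\lvert\nu\rvert=N$ recovering $s_\alpha=s_\beta$ and the layer $\lvert\nu\rvert=N+1$ providing the desired constraints.

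The hard part will be computing this correction term. In Proposition~\ref{ribbon_expand} the reverse-plane-partition count factored over columns, since the only freedom was to collapse equal entries within a single column independently, giving a clean product of binomials; this is what made Lemma~\ref{ribbon-col-sum} elementary. Set-valued tableaux do not decouple in this way: inserting an extra entry into a box is constrained simultaneously by its row- and column-neighbours through the relations $A\le B$ and $A<B$, so adjacent columns interact and no naive column-by-column product is available. Managing the sign-reversing cancellation intrinsic to set-valued objects, and showing that the surviving contributions still organize into the symmetric sums $\alpha_i+\alpha_{n-i+1}$, is the principal obstacle; I expect it to require either an exact ribbon-Schur expansion of the low-degree part of $G_\alpha$ or a new symmetry argument specific to ribbons.
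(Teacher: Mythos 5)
This statement is stated in the paper as a \emph{conjecture}, not a theorem: the authors explicitly remark that ``while one direction is immediate, the other direction has proven to be much more difficult,'' and they offer no proof. Your proposal does not close that gap. You correctly observe that the inductive skeleton of Theorem \ref{thm:littlegribbon} and the combinatorics of Lemma \ref{factor_sym} would transfer to the $G$-setting once one has the $G$-analogue of Lemma \ref{ribbon-col-sum}, i.e.\ the column-sum invariant $\alpha_i+\alpha_{n-i+1}=\beta_i+\beta_{n-i+1}$. That reduction is sound and is a useful observation. But the column-sum invariant is precisely the new content that would be required, and you do not establish it --- you explicitly defer it as ``the principal obstacle'' and only name two possible avenues of attack. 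A proof plan whose sole genuinely new step is left unproved is not a proof.

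There is also a concrete technical problem with the route you sketch. In Lemma \ref{ribbon-col-sum} the relevant terms of $g_\alpha$ are those of degree $n+1$ where $n$ is the number of \emph{columns}; Proposition \ref{ribbon_expand} expands $g_\alpha$ into ribbon Schur functions $s_\gamma$ with $\gamma\le\alpha$, and the degree-$(n+1)$ layer consists exactly of two-row ribbons $(i,n-i+1)$ with coefficient $\alpha_i-1$. For $G_\alpha$ the situation is inverted: the lowest-degree component is $s_\alpha$ of degree $N=\lvert\alpha\rvert$, and the first correction term lives in degree $N+1$, not $n+1$. Any expansion of that degree-$(N+1)$ component into ribbon Schur functions involves shapes of size $N+1$, so ``comparing coefficients of the two-row shapes $s_{(i,\,n-i+1)}$'' (which have size $n+1$) does not parse dimensionally except when $\alpha$ is a single row. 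The Littlewood--Richardson route via \cite{buch2002lrrule} at the layer $\lvert\nu\rvert=N+1$ is the more plausible of your two suggestions, but as you yourself note, set-valued tableaux do not decouple column-by-column, and nobody has shown that the $\lvert\nu\rvert=N+1$ coefficients determine the sums $\alpha_i+\alpha_{n-i+1}$. Until that is done, the statement remains a conjecture.
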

While one direction is immediate, the other direction has proven to be much more difficult.

\subsection{Conjugation invariance}
Given a Young diagram $\lambda=\langle\lambda_1,\lambda_2,\ldots,\lambda_k\rangle$, we define its \textit{transpose} Young diagram to be $\lambda^T=\langle \lambda_1',\dots,\lambda_s'\rangle$, where $\lambda_i'$ is the number of boxes in column $i$ of $\lambda$. This operation extends to skew diagrams by setting $(\lambda/\mu ) ^T = \lambda^T/\mu^T$. For example, $\langle 5,5,2\rangle^T=\langle 3,3,2,2,2\rangle$ and $(\langle 4,3,1\rangle/\langle 2\rangle)^T=\langle 4,3,1\rangle^T/\langle 2\rangle^T=\langle 3,2,2,1\rangle/\langle 1,1\rangle$.

For skew shapes $A$ and $B$ it follows immediately from the Jacobi-Trudi identity that $s_A = s_B$ implies $s_{A^T} = s_{B^T}$. Since it remains open to find an analogue of Jacobi-Trudi for skew Grothendieck polynomials, the answer to the analogous question for $g$ and $G$ is less obvious.

\begin{question}
Suppose $g_A = g_B$. Does it follow that $g_{A^T} = g_{B^T}$? Similarly, suppose $G_A = G_B$. Does it follow that $G_{A^T} = G_{B^T}$?
\end{question}

If conjugation does preserve $g$-equivalence, then we immediately get another necessary condition on $g$-equivalence by taking a transposed version of Theorem \ref{bottleneck_cond}.

\subsection{Ribbon staircases}
Theorem 7.30 of \cite{rsw2009coincidences} describes a class of nontrivial skew equivalences. A \textit{nesting} is a word consisting of the symbols  left parenthesis ``$($," right parenthesis ``$)$," dot ``$.$," and vertical slash ``$|$" where the parentheses must be properly matched. Given a skew shape that may be decomposed into a ribbon $\alpha$ in a certain manner as described in \cite{rsw2009coincidences}, one may obtain a corresponding nesting. Theorem 7.30 states that shapes that may be decomposed with the same ribbon $\alpha$ such that the nestings are reverses of each other are Schur equivalent.

It is interesting to consider whether these equivalences hold for $g$ and $G$ as well. For example, Corollary 7.32 of \cite{rsw2009coincidences} states that $s_{\delta_n/\mu} = s_{(\delta_n/\mu)^T}$ for any diagram $\mu$ contained in the staircase partition $\delta_n = \langle n-1,n-2,\ldots,1\rangle $. Computation strongly suggests the same holds true for the Grothendieck polynomials as well.

\begin{conjecture}\label{stair_pair}
Let $\mu$ be a diagram contained in the staircase partition $\delta_n = \langle n-1,n-2,\ldots,1 \rangle$. Then $g_{\delta_n/\mu} = g_{(\delta_n/\mu)^T}$ and $G_{\delta_n/\mu} = G_{(\delta_n/\mu)^T}$.
\end{conjecture}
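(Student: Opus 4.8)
The plan is to first reduce the statement to a cleaner form and then attempt a bijective proof that upgrades the known Schur identity to the $K$-theoretic level. Since $\delta_n$ is self-conjugate, $\delta_n^T = \delta_n$, so $(\delta_n/\mu)^T = \delta_n/\mu^T$ and the two assertions become $g_{\delta_n/\mu} = g_{\delta_n/\mu^T}$ and $G_{\delta_n/\mu} = G_{\delta_n/\mu^T}$. The Schur analogue $s_{\delta_n/\mu} = s_{\delta_n/\mu^T}$ (Corollary 7.32 of \cite{rsw2009coincidences}) matches the top-degree part of $g$ and the lowest-degree part of $G$ for free, so the genuine content of the conjecture is that the remaining homogeneous pieces (of lower degree for $g$, of higher degree for $G$) also coincide.

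The main approach I would take is to build explicit weight-preserving bijections $\Phi$ between reverse plane partitions of $\delta_n/\mu$ and of $\delta_n/\mu^T$, and $\Psi$ between set-valued tableaux of the two shapes. The guiding idea is to factor each object into a \emph{Schur skeleton} together with \emph{$K$-theoretic decoration}: for a set-valued tableau, record the semistandard tableau of minimal entries together with the surplus entries of each box; for a reverse plane partition, record the semistandard tableau of distinct column entries (exactly as in the proof of Proposition \ref{ribbon_expand}) together with the merging data specifying which adjacent equal entries were repeated. One would then apply a fixed bijection witnessing the Schur case to the skeleton and transport the decoration across it. If the decoration can be carried along so that the $g$- or $G$-weight is preserved, the two generating functions must agree.

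Concretely, I would first pin down a bijection realizing $s_{\delta_n/\mu} = s_{\delta_n/\mu^T}$, ideally as a composition of local jeu de taquin slides or of the ribbon-staircase moves of \cite{rsw2009coincidences}, and track precisely how it relocates cells between rows and columns. Then I would test whether the merging data (for reverse plane partitions) and the surplus-entry data (for set-valued tableaux) are preserved cell-by-cell, or can be redistributed, under this bijection. A useful auxiliary tool is the Schur expansion of $g_{\delta_n/\mu}$: it suffices to prove that the coefficient of each $s_\nu$ is invariant under $\mu \mapsto \mu^T$, and this coefficient-by-coefficient reformulation is amenable to induction on $|\mu|$ or on $n$, peeling corner cells off the staircase.

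The hard part will be the fundamental mismatch between the defining statistic and the transpose operation. For $g$ the weight counts the \emph{columns} in which each value occurs, a statistic that is not symmetric under transpose; indeed transpose is in general \emph{not} a $g$- or $G$-equivalence, and for staircases it is genuinely new information that is not explained by the antipodal-rotation invariance established in Section 2 (one checks directly that $(\delta_n/\mu)^T$ and $(\delta_n/\mu)^*$ differ). Moreover, the examples following Theorem \ref{bottleneck_cond} show that many Schur equivalences arising from the \cite{rsw2009coincidences} nesting machinery \emph{fail} to lift to $g$ and $G$, so any proof must use a feature special to the staircase rather than the general machinery. The crux is therefore to exploit the anti-diagonal structure of $\delta_n$ to convert column data on $\delta_n/\mu$ into column data on $\delta_n/\mu^T$, and to isolate exactly which property of the staircase makes transpose weight-preserving while general nesting reversals are not; resolving this would simultaneously settle the conjugation-invariance Question raised above for this family.
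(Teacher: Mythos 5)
This statement is an open conjecture in the paper (Conjecture \ref{stair_pair}); the authors offer no proof, only computational evidence, so there is no argument of theirs to compare against. Your proposal, read on its own terms, is a research plan rather than a proof, and the gap is exactly where you locate it yourself: the bijections $\Phi$ and $\Psi$ are never constructed. The reductions you do carry out are correct and worth stating --- $\delta_n$ is self-conjugate, so the claim is $g_{\delta_n/\mu}=g_{\delta_n/\mu^T}$ and $G_{\delta_n/\mu}=G_{\delta_n/\mu^T}$, and the Schur-degree components match by Corollary~7.32 of \cite{rsw2009coincidences} --- but everything after that is conditional on a ``transport of decoration'' step whose existence is precisely the content of the conjecture. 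You correctly observe that transpose is not a $g$- or $G$-equivalence in general and that other nesting-reversal Schur equivalences fail to lift (Example~\ref{ribbon_staircase} in the paper is such a failure for $g$), so the proof must isolate a feature special to the staircase; but no such feature is identified, and no candidate for it is proposed beyond the phrase ``anti-diagonal structure.''

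Two more concrete warnings about the skeleton-plus-decoration framing. First, the decomposition of a reverse plane partition into ``distinct column entries plus merging data'' is clean for ribbons (Proposition~\ref{ribbon_expand}) because there the columns of the collapsed filling reassemble into a smaller ribbon; for a general skew shape $\delta_n/\mu$ the collapsed columns have varying heights and do not obviously form a skew shape, so even writing down the Schur expansion of $g_{\delta_n/\mu}$ requires the elegant-filling machinery of Lam--Pylyavskyy rather than the paper's ribbon argument, and the resulting coefficients are not visibly symmetric in $\mu\leftrightarrow\mu^T$. Second, any cell-by-cell transport of surplus entries across a jeu-de-taquin-style bijection will generically break the weak row/column inequalities that set-valued tableaux and reverse plane partitions must satisfy; you would need to prove the bijection realizing $s_{\delta_n/\mu}=s_{\delta_n/\mu^T}$ interacts with adjacency of equal entries in a controlled way, and nothing in the proposal addresses this. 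As it stands the proposal does not establish the conjecture, and the paper leaves it open.
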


However, not all equivalences described by Theorem 7.30 hold for Grothendieck polynomials. 
\iffalse
For example, let $\alpha = (2,3)$ and take the nesting 
\[\begin{tabular}{ c c }
  $\mid$ & .  \\
  1 & 2  \\
\end{tabular}\]
and its reverse
\[\begin{tabular}{ c c }
  . & $\mid$  \\
  1 & 2  \\
\end{tabular}\]
Then the corresponding skew shapes are exactly those given in Example \ref{ribbon_staircase}; they match for $G$ but not for $g$. Based on computation, for $\alpha = (2,3)$ it appears that the skew shapes are $G$ equivalent if and only if the nesting contains only vertical slashes and dots. However, this does not hold for all ribbons $\alpha$. For example, take $\alpha = (1,3)$. Then the shapes given by the nesting 
\[\begin{tabular}{ c c }
  $\mid$ & .  \\
  1 & 2  \\
\end{tabular}\]
and its reverse do not give $G$ equivalent skew shapes. It also remains to find any examples of equivalences for $g$ besides those given in Conjecture \ref{stair_pair}. This leads to the following question. \fi

\begin{question}
For which ribbons $\alpha$ and nestings $\mathcal{N}$ are the corresponding shapes $g$-equivalent or $G$-equivalent?
\end{question}

\section{Acknowledgments}
This research was carried out as part of the 2016 summer REU program at the University of Minnesota, Twin Cities and was supported by NSF RTG grant DMS-1148634 and by NSF grant DMS-1351590. We would like to thank Vic Reiner, Gregg Musiker, Sunita Chepuri, and Pasha Pylyavskyy for their mentorship and support. 

\bibliographystyle{alpha} 
\bibliography{main}
\end{document}